\documentclass[12pt,a4paper,reqno]{amsart}
\usepackage{amsmath}
\usepackage{amsfonts}
\usepackage{amssymb}
\usepackage{amsthm}
\usepackage{enumerate}
\usepackage{centernot}
\usepackage{fullpage}
\usepackage{mathrsfs}
\usepackage{graphicx}
\usepackage[colorlinks,
linkcolor=blue,
anchorcolor=green,
citecolor=blue
]{hyperref}
\usepackage{color}

\theoremstyle{plain}

  \newtheorem{proposition}[]{Proposition}
  \newtheorem{lemma}[]{Lemma}
  \newtheorem{theorem}[]{Theorem}

  \newtheorem{remark}[]{Remark}

\title[Coexistence of infinite clusters]{Coexistence of infinite clusters for percolation and Ising model on $\mathbb{Z}^d$}
\author{Jianping Jiang}
\address{Yau Mathematical Sciences Center, Tsinghua University, Beijing 100084, China.}
\email{jianpingjiang@tsinghua.edu.cn}
\author{Sike Lang}
\address{Qiuzhen College, Tsinghua University, Beijing 100084, China.}
\email{langsk24@mails.tsinghua.edu.cn}

\begin{document}
\begin{abstract}
For independent bond percolation on $\mathbb{Z}^d$ with parameter $p$, let $p_c^b(d)$ be the critical probability. We prove that for each $d \geq 9$,  there is $\epsilon_d>0$ such that for each $p \in (p_c^b(d), p_c^b(d)+\epsilon_d)$, the complement of the infinite open cluster stochastically dominates a supercritical site percolation on $\mathbb{Z}^d$. This improves the previous results by Grimmett, Holroyd and Kozma 2014, and Bock, Damron, Newman and Sidoravicius 2020. Numerical estimates of $p_c^b(d)$ and $p_c^s(d)$ (the site critical probability) suggest that a similar stochastic domination result holds for all $d \geq 4$.

For the Ising model on $\mathbb{Z}^d$ with inverse temperature $\beta$, let $\beta_c(d)$ be the critical inverse temperature. We prove that for each $d \geq 8$, there is $\epsilon_d>0$  such that for each $\beta \in [0,\beta_c(d)+\epsilon_d)$, the $+$ spins under the minus phase stochastically dominate a supercritical site percolation on $\mathbb{Z}^d$. This improves the previous result of Aizenman, Bricmont and Lebowitz 1987. Numerical estimates of $\beta_c(d)$ and $p_c^s(d)$ suggest that a similar stochastic domination result holds for all $d \geq 5$.
\end{abstract}

\maketitle

\section{Introduction and main results}
In bond (site, respectively) percolation on $\mathbb{Z}^d$, each nearest-neighbor edge (site, respectively) is declared to be open with probability $p$ and closed with probability $1-p$ independently. Let $\mathbb{P}^b_p$ ($\mathbb{P}^s_p$, respectively) be the resulting product measure on $\{0,1\}^{E(\mathbb{Z}^d)}$ ($\{0,1\}^{V(\mathbb{Z}^d)}$, respectively) where $E(\mathbb{Z}^d)$ is the set of all nearest-neighbor edges and $V(\mathbb{Z}^d)$ is the set of vertices on $\mathbb{Z}^d$. The critical probabilities $p_c^b(d)$ and $p_c^s(d)$ are defined by
\[p_c^b(d):=\inf \{p\in [0,1]: \mathbb{P}^b_p(\text{ there is an infinite open cluster})>0\},\]
\[p_c^s(d):=\inf \{p\in [0,1]: \mathbb{P}^s_p(\text{ there is an infinite open cluster})>0\}.\]
It is well known that $p_c^b(d), p_c^s(d) \in (0,1)$ for all $d\geq 2$ (see, e.g., Theorem 1.33 of \cite{Gri99}).

When $p>p_c^b(d)$, it is known that there is a unique infinite open cluster \cite{AKN87,BK89}. In \cite{GHK14}, the following question was asked: is there an infinite connected component after removing all vertices in the infinite open cluster? More precisely, let $\mathcal{X}_p(d)$ be the subgraph of $\mathbb{Z}^d$ obtained after removing all vertices in the infinite open cluster; one can define
\[p_{\textnormal{fin}}^b(d):=\sup\{p\in[0,1]: \mathbb{P}^b_p(\mathcal{X}_p(d) \text{ has an infinite connected component})>0\}.\]

We have $p_{\textnormal{fin}}^b(2)=p_c^b(2)=1/2$ \cite{Kes80} since for $p>p_c^b(2)$, the infinite open cluster contains circuits around each vertex. In \cite{GHK14}, it was proved that $p_{\textnormal{fin}}^b(d) > p_c^b(d)$ for all $d \geq 19$ using the one-arm probability established in \cite{KN11}. Combining with the later result \cite{FH17}, one has $p_{\textnormal{fin}}^b(d) > p_c^b(d)$ for all $d \geq 11$. Using shielded percolation, Bock, Damron, Newman and Sidoravicius improved this result to $d\geq 10$. In this paper, we prove
\begin{theorem}\label{thm:per}
	For each $d \geq 9$, there is $\epsilon_d>0$ such that for each $p \in (p_c^b(d), p_c^b(d)+\epsilon_d)$, $\mathcal{X}_p(d)$ stochastically dominates a site percolation on $\mathbb{Z}^d$ with parameter $\tilde{p}=\tilde{p}(p)>p^s_c(d)$. In particular, we have $p_{\textnormal{fin}}^b(d)>p_c^b(d)$.
\end{theorem}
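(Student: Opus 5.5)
The plan is to compare $\mathcal{X}_p(d)$ with a site percolation through a local, finite-range event. The key observation is that a vertex $x$ is certainly not in the infinite open cluster if all $2d$ edges incident to $x$ are closed. That event alone has probability $(1-p)^{2d}$, which is far too small to beat $p_c^s(d)$. A better criterion is needed: $x\notin\mathcal{C}_\infty$ is implied by ``$x$ is not connected by an open path to distance $R$'', i.e.\ $x\notin\mathcal{C}_\infty$ whenever $\{x\leftrightarrow \partial B(x,R)\}$ fails. So I would define the indicator $\eta_x:=\mathbf 1\{x\not\leftrightarrow\partial B(x,R)\}$. Then $\eta\le \mathbf 1_{\mathcal X_p}$ pointwise, and $\eta$ is an $R$-dependent, decreasing site process.

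Next I would estimate its density. At $p=p_c^b(d)$ we have $\mathbb P(x\leftrightarrow\partial B(x,R))\to\theta(p_c)=0$ as $R\to\infty$. For $d\ge 11$ this is known, and more generally it follows from the one-arm bounds of \cite{KN11,FH17}. By continuity in $p$ at fixed $R$, for $p$ slightly above $p_c$ the density of $\eta$ is as close to $1$ as we like. However, an arbitrary finite-range dependent process with density near $1$ only dominates a product measure whose parameter is close to $1$ if the range is fixed (Liggett--Schonmann--Stacey). Here the range $R$ grows as the density approaches $1$, so this route by itself fails. This is exactly why the dimension thresholds in \cite{GHK14} and the shielded-percolation paper arise.

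To go beyond those results I would not aim for density near $1$. Instead I would build a domination tailored to $p_c^s(d)$, using that $1-p_c^b(d)$ is much larger than $p_c^s(d)$ in high dimensions: $p_c^b\approx 1/(2d)$ and $p_c^s\approx 1/(2d)$ as well. The concrete plan has three steps.

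First, I would explore the open cluster of each vertex sequentially in a fixed order, revealing only the edges touched. Second, I would show that, conditionally on any history, the probability that the current vertex $x$ is not connected to infinity through unrevealed edges is at least $\tilde p$. The mechanism is that $x$ can only join the infinite cluster if some incident unrevealed edge is open and leads to an infinite path avoiding the revealed closed set. Third, I would bound this conditional probability from below by $1-2dp\,\theta'$, where $\theta'$ is a uniform upper bound on a conditional one-arm probability. Near $p_c$ this gives $\tilde p\approx 1-o(1)$ for small enough $\epsilon_d$, provided the conditional arm probability is uniformly small.

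The main obstacle is the uniformity in the second and third steps. Conditioning on previously revealed closed edges is harmless by monotonicity (FKG), since it only decreases connection probabilities. Conditioning on previously revealed open connections is the real difficulty: the neighbor of $x$ may already lie in a large revealed cluster. I would handle this by declaring $x$ ``good'' only using the event that $x$ together with its immediate neighborhood is disconnected from the current revealed cluster. I would then control the probability of being adjacent to a revealed large cluster with the triangle/two-point function bounds $\tau(x,y)\le C|x-y|^{2-d}$ valid at $p_c$ for $d\ge 11$ (and the computer-assisted/lace-expansion constants for lower $d$). This would yield $\tilde p>p_c^s(d)$ from explicit numerical bounds such as $p_c^s(d)\le$ known upper estimates. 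The feasibility of reaching $d=9$ rests entirely on making these constants sharp, and I expect that step to be the hardest.
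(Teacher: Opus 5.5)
There is a genuine gap, and it sits in your Steps 2 and 3. For a sequential construction of a domination you need a lower bound on $\mathbb{P}(x\notin\mathcal{C}_\infty\mid\text{history})$ that holds uniformly over all histories. No such bound close to $1$ exists.

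Take a history in which the $2d$ neighbours of $x$ are already known to be connected far away (in the limit, to lie in $\mathcal{C}_\infty$) by paths avoiding the edges at $x$. Let $A$ be the event that all neighbours of $x$ lie in $\mathcal{C}_\infty$. On $A$, $x\notin\mathcal{C}_\infty$ holds iff all $2d$ edges at $x$ are closed. Since $A$ is increasing and that event is decreasing, Harris--FKG gives $\mathbb{P}(x\notin\mathcal{C}_\infty\mid A)\le(1-p)^{2d}$, which is bounded away from $1$. So the bound $1-2dp\,\theta'$ with $\theta'$ uniformly small is false: the conditional one-arm probability of a neighbour can be close to $1$.

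Your patch does not repair this. Calling $x$ good only if $x$ and its neighbourhood avoid the revealed cluster gives conditional probability $0$ in exactly these histories. The two-point function controls only the \emph{averaged} chance of being adjacent to the revealed cluster, and an average is not what a product-measure domination needs. Moreover, $\tau(x,y)\le C|x-y|^{2-d}$ is a statement at $p_c$, known only for $d\ge 11$, while for $p>p_c$ one has $\tau(x,y)\ge\theta(p)^2$. So nothing in the plan produces constants for $d=9,10$, and the claimed $\tilde p>p_c^s(d)$ is unsubstantiated. (Likewise, $\theta(p_c^b(d))=0$ is not known for $d=9,10$.)

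The missing idea is the one you discarded in your first paragraph. $(1-p)^{2d}$ is not too small: near $p_c^b(d)\approx 1/(2d)$ it is about $e^{-1}$, whereas $p_c^s(d)\approx 1/(2d)$. With the paper's rigorous bounds $p_c^b(9)\le 0.1055$ and $p_c^s(9)\le 0.1221$, one gets $(1-0.1055)^{18}>0.134>0.1221$.

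The real obstacle is that isolated-vertex indicators are positively correlated. For adjacent $x,y$, the probability that neither is isolated is $1-2(1-p)^{2d}+(1-p)^{4d-1}>(1-(1-p)^{2d})^2$. So isolated vertices alone do not dominate Bernoulli$((1-p)^{2d})$.

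The paper shows that the larger set $\mathcal{X}_p(d)$ does, in three steps:
\begin{enumerate}[(1)]
\item By Hall's marriage theorem, each $x\in\mathcal{C}_\infty$ is assigned injectively an incident open edge, which is oriented away from $x$. Enough open edges exist because every cluster meeting a finite subset of $\mathcal{C}_\infty$ exits a surrounding box, so a spanning tree plus an exit edge supplies them.
\item The Martineau--Poudevigne--Rax lifting theorem shows the resulting oriented-edge indicators are dominated by i.i.d.\ Bernoulli$(p)$ variables $Z_{\overrightarrow{xy}}$. Hence $\mathbf{1}\{x\in\mathcal{C}_\infty\}\le\max_{y\sim x}Z_{\overrightarrow{xy}}$.
\item These maxima are i.i.d.\ Bernoulli$(1-(1-p)^{2d})$, with measurability handled in finite boxes.
\end{enumerate}

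The theorem then reduces to the numerical inequality $(1-p_c^b(d))^{2d}>p_c^s(d)$ plus continuity in $p$. This needs rigorous upper bounds on $p_c^b(d)$ and $p_c^s(d)$: a second moment method over lazy-walk paths for $9\le d\le 12$, and $p_c^b(d)<1/(d-2)$ for $d\ge 13$. No one-arm or two-point function input is required.
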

 
 \begin{remark}
 	Theorem \ref{thm:per} implies the coexistence of a unique infinite open cluster and a unique infinite connected component of $\mathcal{X}_p(d)$ whenever $p \in (p_c^b(d), p_c^b(d)+\epsilon_d)$ and $d\geq 9$; the uniqueness of the latter follows from Theorem 4.3 of \cite{GHK14}.
 \end{remark}
 \begin{remark}
 	As in \cite{GHK14}, one may define plaquette percolation on $\mathbb{Z}^d$ by declaring each plaquette (which is a unit $(d-1)$-cube) to be open with probability $1-p$ and closed with probability $p$ independently. The critical probability $p_{\textnormal{surf}}(d)$ is defined by
 	\[p_{\textnormal{surf}}(d):=\sup\{p\in[0,1]: \mathbb{P}(\text{there is an infinite surface of open plaquettes})>0\},\]
 	where we refer to \cite{GHK14} for the definition of surface. Theorem \ref{thm:per} and Theorem 1.2 of \cite{GHK14} imply that $p_c^b(d) < p_{\textnormal{surf}}(d)$ for each $d\geq 9$.
 \end{remark}

A key step in the proof of Theorem \ref{thm:per} is to prove that $\mathcal{X}_p(d)$ stochastically dominates a site percolation on $\mathbb{Z}^d$ with probability $(1-p)^{2d}$. If we use numerical estimates of $p_c^b(d)$ and $p_c^s(d)$, we can improve the dimension in Theorem \ref{thm:per} to $d \geq 4$. (Preliminary computations suggest that a more sophisticated version of the method in Section \ref{sec:upperbound} and Proposition \ref{prop:per} may extend Theorem \ref{thm:per} to all $d\geq 6$, but we have not verified all details.) The proof in \cite{GHK14} relies on the one-arm critical exponent $\rho(d)<1$; it is believed $ \rho(d)<1$ for all $d\geq 5$ but not for $d=3, 4$. The proof in \cite{BDNS20} uses a second moment method on shielded paths; with numerical estimates of $p_c^b(d)$, their method works for all $d\geq 7$. 

\begin{remark}
	Even though our stochastic domination method should yield a better coexistence result than those of \cite{GHK14,BDNS20}, we believe that this method fails for $d=3$. In other words, we expect that the coexistence of infinite clusters on $\mathbb{Z}^3$ holds for all $p\in (p_c^b(3),p_c^b(3)+\epsilon_3)$ with some $\epsilon_3>0$ but $\mathcal{X}_p(3)$ does NOT stochastically dominate any supercritical site percolation on $\mathbb{Z}^3$. 
\end{remark}

For the Ising model on $\mathbb{Z}^d$ with inverse temperature $\beta \geq 0$ and no external field (see Section~\ref{sec:Ising} for the definition), let $\beta_c(d)$ be the critical inverse temperature. It was proved by Aizenman, Bricmont and Lebowitz \cite{ABL87} that if $d$ is large enough, there is an infinite $+$ cluster in the minus phase (i.e., the infinite-volume measure obtained from all $-$ boundary conditions) for each $\beta \in [0,\beta_c(d)+\epsilon_d)$ where $\epsilon_d>0$. On $\mathbb{Z}^2$, the coexistence of an infinite $+$ cluster and an infinite $-$ cluster does not occur for any $\beta \geq 0$ \cite{Hig82,Hig93}. On $\mathbb{Z}^3$, Campanino and Russo \cite{CR85} proved the coexistence result for all sufficiently small $\beta$; this was generalized to all $\beta$ less than the critical inverse temperature for triangular-type 3D lattices in \cite{JL26}. Our main result on the Ising model is

\begin{theorem}\label{thm:Ising}
	For the Ising model on $\mathbb{Z}^d$ with $d\geq 8$, there is $\epsilon_d>0$ such that for each $\beta \in [0,\beta_c(d)+\epsilon_d)$, the $+$ spins from the minus phase stochastically dominate a site percolation on $\mathbb{Z}^d$ with parameter $\tilde{p}=\tilde{p}(\beta)>p_c^s(d)$. In particular, there is $+$ percolation under the minus phase for all such $\beta$.
\end{theorem}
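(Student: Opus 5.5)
The idea is to dominate the $+$ spins of the minus phase $\mu^-_\beta$ from below by a product measure. We bound, uniformly over everything outside a site $x$, the conditional probability that $\sigma_x=+$. By the DLR property, for any configuration $\eta$ off $x$,
\[
\mu^-_\beta(\sigma_x=+\mid \sigma_y=\eta_y,\ y\neq x)=\frac{e^{\beta\sum_{y\sim x}\eta_y}}{e^{\beta\sum_{y\sim x}\eta_y}+e^{-\beta\sum_{y\sim x}\eta_y}}\ \geq\ \frac{e^{-2d\beta}}{e^{-2d\beta}+e^{2d\beta}}=\frac{1}{1+e^{4d\beta}} .
\]
A standard criterion (Holley's inequality, or the lemma of Liggett--Schonmann--Stacey for uniformly bounded conditional probabilities) then shows that the $+$ spins stochastically dominate Bernoulli site percolation with parameter
\[
q(\beta):=\frac{1}{1+e^{4d\beta}}.
\]
This bound is deliberately crude, but it is exactly analogous to the $(1-p)^{2d}$ domination the paper uses for $\mathcal{X}_p(d)$. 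Since $q$ is continuous and decreasing in $\beta$, the statement reduces to the strict inequality
\[
\frac{1}{1+e^{4d\beta_c(d)}}>p_c^s(d)\qquad\text{for all } d\ge 8 .
\]
Once this holds, continuity gives $\epsilon_d>0$ with $\tilde p(\beta):=q(\beta)>p_c^s(d)$ for all $\beta<\beta_c(d)+\epsilon_d$. Because $q$ is decreasing, the case $\beta\in[0,\beta_c]$ is covered automatically. The final claim then follows, since supercritical site percolation has an infinite open cluster almost surely.

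The main step is to verify this inequality rigorously, and for that we need an upper bound on $\beta_c(d)$ and an upper bound on $p_c^s(d)$.

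\emph{Upper bound on $\beta_c$.} The random-cluster/FK coupling gives $1-e^{-2\beta_c}\le p_c^b(d)$, so $\beta_c\le -\tfrac12\log(1-p_c^b(d))$. Combining this with a rigorous upper bound on $p_c^b(d)$ of order $1/(2d)$ (for instance, those the paper establishes in Section~\ref{sec:upperbound}, or bounds via the lace expansion), we get $4d\beta_c$ close to $1$, roughly $4d\beta_c\approx 1+O(1/d)$. Mean-field intuition agrees: $\tanh\beta_c\approx 1/(2d)$. Hence $q(\beta_c)\approx 1/(1+e)\approx 0.27$.

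\emph{Upper bound on $p_c^s$.} We need $p_c^s(d)\le C/d$ with an explicit constant. This can be obtained by a comparison or second-moment argument on $\mathbb{Z}^d$, for instance by embedding a supercritical branching-like structure or by using an explicit sub-lattice renormalization. Then $C/d<0.27$ as soon as $d\ge 8$, provided the constant $C$ is good enough.

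I expect the main obstacle to be producing rigorous, sufficiently sharp constants. Specifically, the bound $4d\beta_c\le 1+\delta_d$ must come with an explicit $\delta_d$ valid already at $d=8$, and the upper bound on $p_c^s(8)$ must be explicit and below $q(\beta_c(8))$. If the crude estimate $1-e^{-2\beta_c}\le p_c^b$ is too lossy at $d=8$, I would instead bound $\beta_c$ directly via the infrared bound / Gaussian domination, in the form $\tanh\beta_c\le$ the appropriate random-walk Green's-function expression. I would also sharpen the $p_c^s$ estimate using the same upper-bound method the paper develops for percolation in Section~\ref{sec:upperbound}. The final inequality is then a finite numerical check for $d=8$, together with a monotone-in-$d$ argument covering all larger $d$.
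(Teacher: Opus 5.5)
Your domination step and your reduction are both valid. The single-site bound $\mu^-_\beta(\sigma_x=+\mid\cdot)\ge (1+e^{4d\beta})^{-1}$ does give domination of Bernoulli$(q(\beta))$, and monotonicity and continuity in $\beta$ reduce everything to $q(\beta_c(d))>p_c^s(d)$. The gap is that this inequality cannot be verified at $d=8$ with any of the inputs you propose. Your estimate $q(\beta_c)\approx 1/(1+e)\approx 0.27$ rests on two errors.

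\begin{enumerate}[(i)]
\item The random-cluster comparison is reversed. Bernoulli percolation dominates FK percolation, so $p_c^b(d)\le 1-e^{-2\beta_c(d)}$; this is exactly how Lemma~\ref{lem:p_cub} is proved. Hence an upper bound on $p_c^b$ gives no upper bound on $\beta_c$.
\item With $\beta_c\approx 1/(2d)$, as in your own mean-field heuristic, $4d\beta_c\approx 2$, not $1$. In fact $2d\beta_c(d)>1$ for every $d$, so $q(\beta_c(d))<1/(1+e^2)\approx 0.119$ always.
\end{enumerate}

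Now plug in your fallback inputs: the infrared bound $\beta_c(8)<0.0675$ and the Section~\ref{sec:upperbound} bound $p_c^s(8)\le 0.1409$. You get $q(0.0675)=1/(1+e^{2.16})\approx 0.103<0.1409$, so the final numerical check fails at $d=8$. It also fails at $d=9$ ($\approx 0.105$ versus $0.1221$) and, narrowly, at $d=10$ ($\approx 0.107$ versus $0.1079$). With these inputs your argument closes only from roughly $d=11$ on. The true value $p_c^s(8)\approx 0.075$ does lie below $0.103$, but no rigorous bound that sharp comes out of the methods you cite.

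The missing idea is a much less lossy domination parameter. The worst-case single-site bound pays for the event that all $2d$ neighbours are $-$, which is far too pessimistic. The paper instead argues as follows.
\begin{enumerate}[(i)]
\item It uses the Liggett--Steif equivalence (Theorem~\ref{thm:downwardFKG}) for the translation-invariant, downward-FKG minus phase. This reduces domination to the exponential decay rate of $P^-_{\mathbb{Z}^d,\beta}(\sigma_x=-1\ \forall x\in[1,n]^d)$, which equals $e^{d\beta}/\exp[f_{\mathbb{Z}^d}(\beta)]$.
\item It bounds the free energy via GKS: $f_{\mathbb{Z}^d}(\beta)\ge \ln 2+d\ln\cosh\beta$ (Lemma~\ref{lem:freeenergy}).
\item Together these give Proposition~\ref{prop:Ising} with parameter $1-e^{d\beta}/(2\cosh^d\beta)$.
\end{enumerate}
That parameter exceeds $0.1574$ at $d=8$, $\beta=0.0675$, and at $\beta\approx 1/(2d)$ it tends to $1-\sqrt e/2\approx 0.18$ rather than $0.12$. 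This extra margin is exactly what lets $d=8,9,10$ go through against the bounds in Table~\ref{tab:p_cbounds}.
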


\begin{remark}
	By the Burton-Keane argument \cite{BK89}, Theorem \ref{thm:Ising} implies that there exist both a unique infinite $+$ cluster and a unique infinite $-$ cluster under the minus phase whenever  $d \geq 8$ and $\beta \in [0,\beta_c(d)+\epsilon_d)$.
\end{remark}

If we use numerical estimates of $\beta_c(d)$ and $p_c^s(d)$, we can improve the dimension in Theorem~\ref{thm:Ising} to all $d\geq 5$.  It is believed that the coexistence result also holds for $d=3, 4$~\cite{ABL87}. However, we expect that the stochastic domination result in Theorem \ref{thm:Ising} fails for $d=3, 4$.

The overall idea of the current paper is to study the coexistence problem via stochastic domination. In Section \ref{sec:percolation}, we prove this for bond percolation in Proposition \ref{prop:per} by a novel application of a recent result by Martineau, Poudevigne and Rax \cite{MPR25}; in Section \ref{sec:Ising}, we prove this for the Ising model in Proposition \ref{prop:Ising} by a stochastic domination result of Liggett and Steif \cite{LS06}. The remaining challenging question is that we need good upper bounds for $p_c^b(d)$ and $p_c^s(d)$. Since the critical probability for oriented percolation has a nice upper bound \cite{CD83}, one usually uses it as an upper bound for $p_c^b(d)$ \cite{BDNS20}. This second moment method (which is attributed to Kesten in \cite{CD83}) only explores all oriented paths on $\mathbb{Z}^d$ and leaves much of the space undetected. Our new strategy is to allow the path to perform a lazy random walk along one coordinate and a positive-oriented walk along the remaining $d-1$ coordinates. For the intermediate dimensions $5 \leq d \leq 10$, our method provides better estimates for both $p_c^b(d)$ and $p_c^s(d)$ than those we could find in the literature  (e.g., \cite{GPS26}); for $d \geq 11$, the upper bound for $p_c^b(d)$ in \cite{FH17} is better than ours. It is clear that a more sophisticated exploration of the space should yield better upper  bounds; we only use 1D random walk because both the analysis and computation are tractable. Our main results, Theorems \ref{thm:per} and \ref{thm:Ising}, are proved in Section \ref{sec:proofofmainthms}. Finally, in Section \ref{sec:numerical}, we list some numerical estimates for $p_c^b(d), p_c^s(d), \beta_c(d)$ that we found in the literature. Combining with Propositions \ref{prop:per} and \ref{prop:Ising}, these values support our claims that the coexistence holds for all $d\geq 4$ in bond percolation and for all $d\geq 5$ in the Ising model.

\section{Percolation and stochastic domination}\label{sec:percolation}
Let $S$ be a finite or countable set, and $Y:=\{Y_x: x \in S\}$, $Z:=\{Z_x: x \in S\}$ be families of random variables taking values in $\{0,1\}$. A function $f: \{0,1\}^S \rightarrow \mathbb{R}$ is \textit{increasing} if $f(\omega) \leq f(\tilde{\omega})$ whenever $\omega \leq \tilde{\omega}$ (i.e., $\omega_x \leq \tilde{\omega}_x$ for each $x \in S$).  An event $A\subset \{0,1\}^S$ is increasing if its indicator function ${\bf1}_A$ is an increasing function. We say that $Y$ \textit{stochastically dominates} Z, written $Y\geq_{\textnormal{st}} Z$, if
\[\mathbb{E} f(Y) \geq \mathbb{E} f(Z)\]
for all bounded, increasing, measurable functions $f$.

We will use the following result on stochastic domination by Martineau, Poudevigne and Rax \cite{MPR25}.
\begin{theorem}[\cite{MPR25}]\label{thm:lift}
	Let $A, B$ be two countable and nonempty sets, and $\pi: A \rightarrow  B$ be a surjective map. Let $(X_b)_{b\in B}$ be a family of i.i.d. Bernoulli random variables with parameter $p\in [0,1]$. On the same probability space, let $(S(b))_{b\in B}$ be a family of random variables such that, for each $b\in B$, $S(b)$ takes values in $\pi^{-1}(\{b\})$. Define $Y=(Y_a)_{a\in A}$ by
	\[Y_a:=X_{\pi(a)} {\bf 1}_{\{S\circ \pi(a)=a\}}.\]
	Let $Z=(Z_a)_{a\in A}$ be i.i.d. Bernoulli random variables with parameter $p$. Then $Y\leq_{\textnormal{st}} Z$.
\end{theorem}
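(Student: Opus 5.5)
The plan is to reduce to a finite problem and then remove the dependence between $S$ and $X$ by maximizing over all possible selections. After that, the independent Bernoulli variables $X_b$ are replaced by the $Z$'s one fiber at a time.

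\emph{Reduction.} Stochastic domination between $\{0,1\}^A$-valued random vectors follows from domination of all finite-dimensional marginals, since increasing cylinder events generate the relevant class. So fix a finite $F\subset A$ and an increasing event $U$ depending only on the coordinates in $F$. Restricting to $F$, we may assume that $B$ is finite (replace it by $\pi(F)$) and that each fiber $\pi^{-1}(b)\cap F$ is finite. If $S(b)\notin F$, simply no coordinate of that fiber is on. We must show $\mathbb{P}(Y\in U)\le \mathbb{P}(Z\in U)$.

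\emph{Removing the dependence.} For a deterministic selection $s$, with $s(b)\in\pi^{-1}(b)\cup\{\text{none}\}$, and $x\in\{0,1\}^B$, set
\[Y^s(x)_a := x_{\pi(a)}{\bf 1}_{\{s(\pi(a))=a\}}.\]
Then $Y=Y^S(X)$. Define
\[U^*:=\{x\in\{0,1\}^B:\ \exists s,\ Y^s(x)\in U\}.\]
Since $Y^s(x)$ is increasing in $x$ and $U$ is increasing, $U^*$ is an increasing event in $x$, and $\{Y\in U\}\subset\{X\in U^*\}$. Hence
\[\mathbb{P}(Y\in U)\le \mathbb{P}_p(X\in U^*),\]
and the right-hand side involves only the i.i.d.\ Bernoulli($p$) variables $X_b$. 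This step is what neutralizes the arbitrary correlation between $S$ and $X$. A naive sequential (Holley-type) coupling would fail precisely because $S(b')$ may encode $X_b$.

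\emph{Hybrid swapping, fiber by fiber.} Order $B=\{b_1,\dots,b_n\}$. For $0\le k\le n$, let $E_k$ be the event that there exists a selection $s$ on the fibers $b_{k+1},\dots,b_n$ such that the configuration on $F$ equals $Z$ on the fibers $b_1,\dots,b_k$ and equals $Y^s(X)$ on the remaining fibers, and this configuration lies in $U$. Here the $X$'s and $Z$'s are taken independent. Then $\mathbb{P}(E_0)=\mathbb{P}_p(U^*)$ and $\mathbb{P}(E_n)=\mathbb{P}(Z\in U)$, so it suffices to show $\mathbb{P}(E_{k-1})\le\mathbb{P}(E_k)$.

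Condition on all variables except $X_{b_k}$ and the $Z$'s on fiber $b_k$. The set $W$ of achievable configurations on the other fibers is then fixed; it consists of the $Z$-values on the earlier fibers together with all choices of selection on the later fibers. Let $V$ be the set of configurations $v\in\{0,1\}^{\pi^{-1}(b_k)\cap F}$ such that $(w,v)\in U$ for some $w\in W$. As a union of increasing sets, $V$ is increasing. The event $E_{k-1}$ becomes "$X_{b_k}=0$ and $0\in V$, or $X_{b_k}=1$ and $e_a\in V$ for some $a$ in the fiber" (where $e_a$ is the indicator of $a$). The event $E_k$ becomes "$Z|_{\text{fiber}}\in V$".

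The single-fiber comparison then splits into cases:
\begin{itemize}
\item If $0\in V$, then $V$ is everything, and both conditional probabilities equal $1$.
\item If $0\notin V$ but some $e_a\in V$, the first probability is $p$. The second is at least $\mathbb{P}(Z_a=1)=p$ by monotonicity of $V$.
\item Otherwise the first probability is $0$.
\end{itemize}
Integrating over the conditioning gives $\mathbb{P}(E_{k-1})\le\mathbb{P}(E_k)$, and chaining over $k$ yields the theorem.

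The main obstacle is organizing the hybrid events so that the existential quantifier over selections on the remaining fibers still produces an increasing set $V$ on the fiber being swapped. This is handled by the union-over-$w$ observation above, but I expect the measurability and conditioning bookkeeping to need care.
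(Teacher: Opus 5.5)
Your argument is correct. The comparison is unusual here because the paper does not prove Theorem \ref{thm:lift} at all: it imports it verbatim from \cite{MPR25}. You instead give a self-contained, finite-dimensional proof built on two ideas. First, you replace the correlated selection $S$ by the increasing event $U^*=\{x:\exists s,\ Y^s(x)\in U\}$, which depends only on the i.i.d.\ $X$ and so neutralizes any dependence of $S$ on $X$. Second, you run a fiber-by-fiber hybrid swap. There the existential quantifier over selections on the unswapped fibers is absorbed into the increasing section $V$, so each step reduces to $p=\mathbb{P}(Z_a=1)\le\mathbb{P}(Z|_{\pi^{-1}(b_k)\cap F}\in V)$ whenever some $e_a\in V$. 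The case analysis checks out, as do the ``none'' option for $S(b)\notin F$ and the independence of $(X_{b_k},Z|_{\pi^{-1}(b_k)\cap F})$ from the conditioning variables. In the finite setting there is no genuine measurability issue.

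The citation buys brevity. Your route buys self-containedness and a slightly stronger conclusion, since $\{Y\in U\}\subset\{X\in U^*\}$ holds for every selection simultaneously. That would even let one drop the measurable-selection construction (the events $F_j,E_j$) in the proof of Proposition \ref{prop:per} and use Lemma \ref{lem:1-1} pointwise in $\omega$.

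One small fix concerns the reduction. Passing from increasing cylinder events to all of $\{0,1\}^A$ does not work because those events ``generate the relevant class'': increasing events are not closed under complements, so no monotone-class argument applies. The right justification is Strassen's theorem on each finite marginal, followed by a subsequential weak limit of the monotone couplings on the compact space $\{0,1\}^A\times\{0,1\}^A$. This is the same standard step the paper uses at the end of the proof of Proposition \ref{prop:per}.
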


In our intended application of Theorem \ref{thm:lift}, $(X_b)_{b\in B}:=(\omega_e)_{e\in E(\mathbb{Z}^d)}$ is the bond percolation configuration with openness being identified with $1$ and closedness with $0$. For each unoriented edge $xy \in E(\mathbb{Z}^d)$, there are two associated \textit{oriented edges} $\overrightarrow{xy}$ and $\overrightarrow{yx}$; $\pi$ is the map which removes the orientation for each oriented edge (i.e., maps both $\overrightarrow{xy}$ and $\overrightarrow{yx}$ to $xy=yx$). In order to define $(S(b))_{b\in B}$, we first prove
\begin{lemma}\label{lem:1-1}
	For each fixed $\omega \in \{0,1\}^{E(\mathbb{Z}^d)}$, let $\mathcal{C}_{\infty}(\omega)$ be the set of vertices belonging to an infinite open cluster; $\mathcal{C}_{\infty}(\omega):=\emptyset$ if it does not exist.  There is a one-to-one map $f_{\omega}: \{x\in\mathbb{Z}^d: x\in \mathcal{C}_{\infty}(\omega)\} \rightarrow \{e\in E(\mathbb{Z}^d): \omega_e=1\}$ such that $f_{\omega}(x)$ is incident to $x$ for each $x\in \mathcal{C}_{\infty}(\omega)$.
\end{lemma}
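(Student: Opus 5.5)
The plan is to work one infinite open cluster at a time. Distinct open clusters are vertex-disjoint, so their open edges are disjoint as well. It therefore suffices to build, for each infinite open cluster $C$ (viewed as a connected subgraph of $\mathbb{Z}^d$ whose edges are open), an injective map $f_C$ from the vertices of $C$ to the edges of $C$ with $f_C(x)$ incident to $x$. Setting $f_\omega := f_C$ on each such $C$ then gives the required one-to-one map on $\mathcal{C}_\infty(\omega)$. This works for every fixed $\omega$, with no appeal to uniqueness of the infinite cluster.

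Fix an infinite open cluster $C$. Since $C$ is a countable connected graph, it has a spanning tree $T$. One concrete way to get it is to enumerate the vertices of $C$ and add, in order, a shortest open path connecting each new vertex to the tree built so far. This needs no choice beyond countable enumeration.

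Pick a root $r \in C$. Since $T$ is infinite, connected and locally finite (degrees are at most $2d$), König's lemma gives an infinite self-avoiding ray $r = v_0, v_1, v_2, \dots$ in $T$. Define $f_C$ as follows:
\begin{itemize}
\item for a vertex $x$ not on the ray, let $f_C(x)$ be the edge from $x$ to its parent in $T$ (rooted at $r$);
\item for $x = v_i$ on the ray, let $f_C(v_i) := v_i v_{i+1}$.
\end{itemize}
Each edge of $T$, hence each open edge, is incident to the vertex it is assigned to.

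For injectivity, note first that the parent map $x \mapsto (\text{edge to parent})$ is injective on $T\setminus\{r\}$, since each edge of a rooted tree is the parent edge of exactly its child endpoint. The ray edge $v_iv_{i+1}$ is the parent edge of $v_{i+1}$, and in the new assignment it goes to $v_i$ instead; $v_{i+1}$ in turn receives $v_{i+1}v_{i+2}$. Thus on the ray we have shifted the parent-edge assignment by one step, which frees the root from needing an edge to a nonexistent parent. The images of ray vertices are exactly the ray edges, each used once. The images of off-ray vertices are parent edges of off-ray vertices, and these are never ray edges, because every ray edge is the parent edge of a ray vertex. Hence $f_C$ is injective.

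The only real subtlety is the root, which has no parent edge. It is handled by the infinite ray, and this is precisely where infiniteness of the cluster is used: for a finite tree, vertices outnumber edges by one and no such injection exists.
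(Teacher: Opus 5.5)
Your argument is correct, but it takes a different route from the paper's proof.

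\textbf{The paper's route.} The paper does not build the injection directly. It forms the bipartite graph between $\mathcal{C}_\infty(\omega)$ and the open edges, with each vertex joined to its incident open edges. It then checks Hall's condition for every finite $U_1$:
\begin{itemize}
\item place $U_1$ in a box $B_m$ and split it along the open clusters $C_j$ of $\omega|_{B_m}$;
\item note that each $C_j$ has a spanning tree together with an open edge leaving $B_m$, which is where infiniteness is used;
\item count edges cluster by cluster.
\end{itemize}
Finally it invokes Marshall Hall's infinite marriage theorem, which applies because every vertex has degree at most $2d$.

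\textbf{Comparison.} Both proofs rest on the same count: a tree plus one extra edge toward infinity. The paper uses it locally, with the exit edge from $B_m$ playing the role of your ray, and lets Hall's theorem (a compactness argument) glue the finite pieces together. You use it globally by shifting parent edges along a K\"onig ray. This avoids Hall's theorem altogether and even gives a bijection between the vertices of each infinite cluster and the edges of its spanning tree. Your claim that $v_iv_{i+1}$ is the parent edge of $v_{i+1}$ deserves a half-line of justification: $v_0\cdots v_{i+1}$ is the unique tree path from $r$ to $v_{i+1}$.

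\textbf{What each route buys.} Your construction can be made measurable in $\omega$ by using deterministic tie-breaking. For example, take a breadth-first tree rooted at the least-indexed vertex of $C$, and at each step of the ray choose the least-indexed child with an infinite subtree. A measurable $f_\omega$ would let one define the orientations $S_e$ on all of $E(\mathbb{Z}^d)$ at once. That would skip the finite-box enumeration in the proof of Proposition~\ref{prop:per}. The paper's route is shorter given Hall's theorem and needs no explicit root or ray. However, it is non-constructive, which is exactly why the paper must treat measurability separately there.
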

\begin{proof}
	We use Marshall Hall's extension of the marriage theorem. Let $U:=\{x\in\mathbb{Z}^d: x\in \mathcal{C}_{\infty}(\omega)\}$, $V:=\{e\in E(\mathbb{Z}^d): \omega_e=1\}$. We build a bipartite graph with bipartite sets $U$ and $V$, and edge set $E_{UV}:=\{xe:  x\in U, e\in E(\mathbb{Z}^d), x\in e, \omega_{e}=1\}$. In this graph, each vertex in $U$ has degree $\leq 2d$. For each finite subset $U_1 \subset U$, there is $m \in \mathbb{N}$ such that $U_1 \subset B_m:=[-m,m]^d \cap \mathbb{Z}^d$. $U_1$ belongs to the union of all open clusters in $\omega|_{B_m}$ (the restriction of $\omega$ to $E(B_m)$). More precisely, there exist distinct open clusters of $\omega |_{B_m}$, $C_1,\dots,C_k$, such that $U_1\cap V(C_j)\neq \emptyset$ for each $j$ and $U_1 \subset \cup_{j=1}^k V(C_j)$. Each such $C_j$ must connect to $B_{m+1} \setminus B_m$ by the definition of $U$. Using a spanning tree of $C_j$ and an open edge between $C_j$ and $B_{m+1} \setminus B_m$, we get
	\[|\{e \in V:  \exists x\in U_1 \cap V(C_j), xe\in E_{UV}\}| \geq |U_1 \cap V(C_j)|,\ \forall j=1,\dots,k.\]
	Summing over $j$ gives 
	\[|\{e \in V:  \exists x \in U_1, xe\in E_{UV}\}| \geq |U_1|.\]
	The lemma follows from Theorem 1 of \cite{Hal48}.
\end{proof}

%For each $xy\in E(\mathbb{Z}^d)$, define (note that $S$ depends on $\omega$)
%\begin{equation}\label{eq:Sdef}
%	S(xy):=\begin{cases}
%		\overrightarrow{xy}, & x\in \mathcal{C}_{\infty}(\omega), xy=f_{\omega}(x),\\
% 	\overrightarrow{yx}, & y\in \mathcal{C}_{\infty}(\omega), yx=f_{\omega}(y),\\
%		\text{arbitrary } \overrightarrow{xy} \text{ or }\overrightarrow{yx}, & \text{otherwise}.
%	\end{cases}
%\end{equation}

The following proposition is our main stochastic domination result for percolation.
\begin{proposition}\label{prop:per}
	For bond percolation on $\mathbb{Z}^d$ with edge open probability $p$, $V(\mathbb{Z}^d) \setminus \mathcal{C}_{\infty}$ stochastically dominates a site percolation with parameter $(1-p)^{2d}$.
\end{proposition}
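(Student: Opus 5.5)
The plan is to apply Theorem \ref{thm:lift} to the closed-edge indicators and then pass to a vertex statement by a monotone map. Set $B:=E(\mathbb{Z}^d)$ and let $A$ be the set of oriented edges, with $\pi$ forgetting orientation. Define $X_e:=1-\omega_e$, so that $(X_e)$ is i.i.d.\ Bernoulli with parameter $q:=1-p$. For each $\omega$, Lemma \ref{lem:1-1} supplies an injection $f_\omega$; fix one measurably, for instance by a deterministic enumeration and a compactness/limit argument, or by simply choosing a measurable selection. Then define, for $xy\in E(\mathbb{Z}^d)$,
\[S(xy):=\begin{cases}\overrightarrow{xy}, & x\in\mathcal{C}_\infty(\omega),\ f_\omega(x)=xy,\\ \overrightarrow{yx}, & y\in\mathcal{C}_\infty(\omega),\ f_\omega(y)=xy,\\ \overrightarrow{xy}\ \text{(say, with a fixed orientation convention)}, & \text{otherwise}.\end{cases}\]
Injectivity of $f_\omega$ guarantees that the first two cases are exclusive, so $S$ is well defined and takes values in $\pi^{-1}(\{xy\})$. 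Theorem \ref{thm:lift} then gives $Y\le_{\rm st} Z$, where $Y_{\overrightarrow{xy}}=(1-\omega_{xy})\mathbf 1_{\{S(xy)=\overrightarrow{xy}\}}$ and $Z$ is i.i.d.\ Bernoulli$(q)$ on the oriented edges.

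The key observation concerns a vertex $x\in\mathcal{C}_\infty$. The edge $f_\omega(x)$ is open and incident to $x$, and $S(f_\omega(x))$ is the orientation pointing out of $x$. Hence $Y_{\overrightarrow{x\,\cdot}}=0$ for that outgoing edge, because the edge is open. Now define the decreasing map $\Phi:\{0,1\}^A\to\{0,1\}^{\mathbb{Z}^d}$ by
\[\Phi(\eta)_x:=1-\prod_{y\sim x}\eta_{\overrightarrow{xy}}.\]
Then $\Phi(\eta)_x=1$ unless all $2d$ edges out of $x$ have $\eta=1$. By the observation, $\Phi(Y)_x=1$ for every $x\in\mathcal{C}_\infty$, so $\Phi(Y)\ge \mathbf 1_{\mathcal{C}_\infty}$ coordinatewise. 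Equivalently, $1-\Phi(Y)\le \mathbf 1_{V\setminus \mathcal{C}_\infty}$.

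Because $1-\Phi$ is increasing, $Y\le_{\rm st}Z$ implies $1-\Phi(Y)\le_{\rm st}1-\Phi(Z)$. Moreover $1-\Phi(Z)_x=\prod_{y\sim x}Z_{\overrightarrow{xy}}$. The sets of outgoing oriented edges are disjoint for distinct $x$, so these are i.i.d.\ Bernoulli$((1-p)^{2d})$ variables. This is where orientation is essential: unoriented edges would be shared between neighbouring vertices and would destroy the independence. Chaining the inequalities gives
\[\mathbf 1_{V\setminus\mathcal{C}_\infty}\ \ge\ 1-\Phi(Y)\ \le_{\rm st}\ \text{site percolation}((1-p)^{2d}),\]
but the inequalities point the wrong way, so this chain proves nothing yet. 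I must recheck the direction.

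The fix is to apply the theorem to the complementary variables. The trouble above arises because $Y$ is defined via closed edges but is only forced to vanish on infinite-cluster vertices. Instead take $X_e:=\omega_e$, which is Bernoulli$(p)$. Then $Y_{\overrightarrow{xy}}=\omega_{xy}\mathbf 1_{\{S=\overrightarrow{xy}\}}$, and for $x\in\mathcal{C}_\infty$ the outgoing edge $f_\omega(x)$ has $Y=1$. So $\mathbf 1_{\mathcal{C}_\infty}(x)\le \max_{y\sim x}Y_{\overrightarrow{xy}}=:\Psi(Y)_x$, where $\Psi$ is increasing. By Theorem \ref{thm:lift}, $\Psi(Y)\le_{\rm st}\Psi(Z)$ with $Z$ i.i.d.\ Bernoulli$(p)$. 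By the disjointness of outgoing stars, $\Psi(Z)$ is i.i.d.\ with parameter $1-(1-p)^{2d}$. Hence $\mathbf 1_{\mathcal{C}_\infty}\le_{\rm st}$ site percolation$(1-(1-p)^{2d})$, and taking complements (a decreasing bijection) yields $\mathbf 1_{V\setminus\mathcal{C}_\infty}\ge_{\rm st}$ site percolation$((1-p)^{2d})$.

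The main technical obstacle is measurability of $S$, that is, choosing $f_\omega$ measurably in $\omega$, since Theorem \ref{thm:lift} needs $S$ to be a random variable. I would handle this by proving the domination for finite-dimensional increasing events. Restricted to a box $B_m$, only finitely many coordinates matter, and on each finite piece a selection can be made among finitely many candidate matchings using a fixed ordering. Alternatively, one can realize $f_\omega$ as a limit of lexicographically-first matchings in boxes, as in Hall's compactness proof, which yields a measurable choice.
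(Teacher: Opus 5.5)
Your final argument, after you discard the first attempt with closed-edge indicators, is correct and essentially the paper's proof. You apply Theorem~\ref{thm:lift} with $X_e=\omega_e$, use $\mathbf{1}_{\{x\in\mathcal{C}_\infty\}}\le\max_{y\sim x}Y_{\overrightarrow{xy}}$, use disjointness of the outgoing stars to get i.i.d.\ Bernoulli$(1-(1-p)^{2d})$ variables, and take complements; your measurability fix (work in a box and take the first admissible injection among finitely many candidates in a fixed order) is exactly how the paper does it, with the small detail that the box's edge set must include the boundary-crossing edges so each $x\in B_n$ keeps all $2d$ outgoing edges.
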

\begin{proof}
	The choice of $f_{\omega}$ in Lemma \ref{lem:1-1} is not a priori measurable in $\omega$. So we prove the stochastic domination in each finite box $B_n$ first. Let
	\[B:=\{e \in E(\mathbb{Z}^d): e \cap B_n \neq \emptyset\}.\]
	So $B$ includes those edges crossing the boundary of $B_n$. Let $A:=\cup_{xy \in B}\{\overrightarrow{xy},\overrightarrow{yx}\}$ and $X_e:=\omega_e$ for each $e \in B$. We claim that there exist random variables $(S_e)_{e\in B}$ which assign exactly one direction  to each $e \in B$ such that: 
	\[\forall x \in  B_n \cap \mathcal{C}_{\infty}(\omega), \exists y \sim x: \omega_{xy}=1 \text{ and } S_{xy}(\omega)=\overrightarrow{xy},\]
	where $y \sim x$ means $xy \in E(\mathbb{Z}^d)$. We next prove the proposition modulo the proof of this claim.
	 
	 Define $Y=(Y_a)_{a\in A}$ by
	\[Y_{\overrightarrow{xy}}(\omega)=\omega_{xy}{\bf 1}_{\{S_{xy}=\overrightarrow{xy}\}}, \ \forall \omega \in \{0,1\}^{E(\mathbb{Z}^d)}.\]
	Then Theorem \ref{thm:lift} implies that $Y\leq_{\textnormal{st}} Z$ where $Z=(Z_{\overrightarrow{xy}})_{\overrightarrow{xy}\in A}$ is a family of i.i.d. Bernoulli($p$) random variables. The definition of $S_e$ implies that
	\begin{equation}\label{eq:indlesY}
		{\bf 1}_{\{x\in \mathcal{C}_{\infty}(\omega)\}} \leq \max_{y \sim x} Y_{\overrightarrow{xy}}(\omega), \ \forall x\in B_n, \omega \in \{0,1\}^{E(\mathbb{Z}^d)}.
	\end{equation}
	$Y\leq_{\textnormal{st}} Z$ implies that there exists a probability measure $\nu$ on $\{0,1\}^{A} \times \{0,1\}^{A}$ with marginals distributed as $Y$ and $Z$ such that
	\[\nu(\{(\omega,\tilde{\omega}): \omega_{\vec{e}}\leq \tilde{\omega}_{\vec{e}}, \forall \vec{e}\in A\})=1.\]
	So we have
	\begin{equation}\label{eq:YleqZ}
		\max_{y \sim x} Y_{\overrightarrow{xy}} \leq \max_{y \sim x} Z_{\overrightarrow{xy}}\  \nu \text{-a.s.}, \ \forall x\in B_n.
	\end{equation}
	Note that $(\max_{y \sim x} Z_{\overrightarrow{xy}})_{x\in B_n}$ is a family of i.i.d. Ber$(1-(1-p)^{2d})$ random variables. This combined with \eqref{eq:indlesY} and \eqref{eq:YleqZ} proves the stochastic domination restricted to $B_n$. Since $n$ is arbitrary, the desired domination on $\mathbb{Z}^d$ follows. 
	
	Let us now prove the claim.
	Enumerate all pairs 
	\[(D_j,g_j), \ j=1,\dots, M\]
	where $D_j \subset B_n$ and $g_j: D_j \rightarrow B$ is one-to-one and assigns each vertex in $D_j$ an incident edge. For each $j$, define
	\[F_j:= \bigcap_{x \in D_j} \{\omega: x \in \mathcal{C}_{\infty}(\omega)\} \bigcap \bigcap_{x \in B_n \setminus D_j} \{\omega: x \notin \mathcal{C}_{\infty}(\omega)\} \bigcap \bigcap_{x\in D_j}\{\omega: \omega_{g_j(x)}=1\}.\]
	That is, $F_j$ is the event that $(D_j,g_j)$ is admissible. It is easy to see that each $F_j$ is measurable. For each fixed $\omega$, Lemma \ref{lem:1-1} implies that at least one $(D_j,g_j)$ is admissible. Let
	\[E_j:=F_j \setminus \cup_{k=1}^{j-1} F_k, \ j=1,\dots,M\]
	Then $(E_j)_{j=1}^M$forms a measurable partition of the full configuration space; $E_j$ is the event that $(D_j,g_j)$ is the first admissible candidate.
	
	For each $j$, let $s_j$ be the following deterministic orientation of edges in $B$: for each $x\in D_j$, orient $g_j(x)$ away from $x$, and orient the remaining edges in $B$ by a fixed rule. Define
	\[S_e(\omega):=s_j(e), \text{ if }\omega\in E_j.\]
	Then for each $e\in B$ and either orientation $\vec{f}$ of $e$,
	\[\{\omega: S_e(\omega)=\vec{f}\}=\cup_{j, s_j(e)=\vec{f}} E_j,\]
	which is a union of measurable events. This finishes the proof of the claim and thus the proposition.
\end{proof}

\begin{remark}
	One may get a better parameter than  $(1-p)^{2d}$ in Proposition \ref{prop:per}. However, the improvement that we obtained is too small to strengthen Theorem \ref{thm:per}. 
\end{remark}

\section{Ising model and stochastic domination}\label{sec:Ising}
In this section, we prove a result analogous to Proposition \ref{prop:per} for the Ising model. Let $\Lambda \subset \mathbb{Z}^d$ be a finite subset; we view $\Lambda$ as a subgraph of $\mathbb{Z}^d$ with vertex set $\Lambda$ and edge set $E(\Lambda):=\{xy \in E(\mathbb{Z}^d): x, y\in \Lambda\}$. The Ising model on $\Lambda$ at inverse temperature $\beta \geq 0$ with boundary conditions $\eta \in \{-1, 0, +1\}^{\mathbb{Z}^d}$ is the probability measure $P_{\Lambda,\beta}^{\eta}$ on $\{-1,+1\}^{\Lambda}$ such that
\begin{equation}\label{eq:Isingdef}
	P_{\Lambda,\beta}^{\eta}(\sigma):=\frac{\exp[\beta \sum_{xy\in E(\Lambda)}\sigma_x\sigma_y+\beta \sum_{xy: x \sim y, x\in \Lambda, y\notin \Lambda}\sigma_x\eta_y]}{Z_{\Lambda,\beta}^{\eta}}, \ \forall \sigma \in \{-1,+1\}^{\Lambda},
\end{equation}
where $Z_{\Lambda,\beta}^{\eta}$ is the partition function. We write $P^-_{\Lambda,\beta}$ ($P^0_{\Lambda,\beta}$, respectively) if $\eta_x=-1$ ($\eta_x=0$, respectively) for each $x\in \mathbb{Z}^d$. By the GKS inequalities \cite{Gri67,KS68}, $P^0_{\Lambda,\beta}$ converges weakly to an infinite-volume measure $P^0_{\mathbb{Z}^d,\beta}$ as $\Lambda \uparrow \mathbb{Z}^d$; the FKG inequality implies that $P^-_{\Lambda,\beta}$ converges weakly to an infinite-volume measure $P^-_{\mathbb{Z}^d,\beta}$ as $\Lambda \uparrow \mathbb{Z}^d$, which we call the \textit{minus phase}. When we talk about stochastic domination for the Ising model, we may view $+1$ as $1$ and $-1$ as $0$. Liggett and Steif \cite{LS06} proved the following result.

\begin{theorem}[\cite{LS06}]\label{thm:downwardFKG}
	For the minus phase $P^-_{\mathbb{Z}^d,\beta}$ of the Ising model, the following two statements are equivalent:
	\begin{enumerate}[(i)]
		\item $+$ spins under $P^-_{\mathbb{Z}^d,\beta}$ stochastically dominate a site percolation on $\mathbb{Z}^d$ with parameter $p\in[0,1]$.
		\item $P^-_{\mathbb{Z}^d,\beta}(\sigma_x=-1, \forall x \in [1,n]^d \cap \mathbb{Z}^d)\leq (1-p)^{n^d}$ for all sufficiently large $n$.
	\end{enumerate}
\end{theorem}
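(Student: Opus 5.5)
The plan is to treat the two directions separately. The direction (i)$\Rightarrow$(ii) is immediate, and the direction (ii)$\Rightarrow$(i) goes through a sequential coupling along the lexicographic order. The key quantity for the latter is the ``half-space'' conditional probability
\[
r:=P^-_{\mathbb{Z}^d,\beta}\bigl(\sigma_0=-1 \,\big|\, \sigma_y=-1 \ \forall y\prec 0\bigr),
\]
where $\prec$ is the lexicographic order on $\mathbb{Z}^d$. This conditioning on an infinite set is defined as the monotone limit of the conditionings on the finite sets $\{y\prec 0,\ |y|\le m\}$. The limit exists because, by the FKG inequality for the Ising model, conditioning on more sites being $-$ can only increase the conditional probability of $\sigma_0=-1$. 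By translation invariance of the minus phase, $r$ is the same at every site.

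\emph{(i)$\Rightarrow$(ii).} The event $\{\sigma_x=-1\ \forall x\in[1,n]^d\}$ is decreasing. Stochastic domination of the product measure with parameter $p$ therefore gives the bound $(1-p)^{n^d}$ directly, for every $n$.

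\emph{(ii)$\Rightarrow$(i).} I would prove two facts.

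\textbf{(a)} $r\le 1-p$. Write $P^-_{\mathbb{Z}^d,\beta}(\sigma\equiv -1 \text{ on } [1,n]^d)$ as a product, over $x\in[1,n]^d$ in lexicographic order, of conditional probabilities of $\sigma_x=-1$ given that all earlier box sites are $-$.
\begin{itemize}
\item For every $x$, finite energy gives a lower bound $c>0$ on each factor.
\item For the $(1-o(1))n^d$ sites $x$ at distance $\ge m$ from the lower faces of the box, the box-past of $x$ contains the translate of $\{y\prec 0,\ |y|\le m\}$. By the FKG monotonicity, the factor is therefore at least $r_m$, the finite-$m$ approximant, where $r_m\uparrow r$.
\end{itemize}
Combining these with (ii) gives $(1-p)^{n^d}\ge c^{o(n^d)}\, r_m^{(1-o(1))n^d}$. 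Taking $n$-th-power roots and then $m\to\infty$ yields $r\le 1-p$.

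\textbf{(b)} Sequential coupling on a box $B_n$. Reveal the spins of $B_n$ one at a time in lexicographic order. By the strong FKG property of the Ising model (a consequence of its Markov structure), $P(\sigma_x=-1\mid \text{revealed past})$ is maximized when the revealed past is all $-$. That worst case is at most the probability conditioned on the entire half-space past being $-$, which equals $r\le 1-p$. Hence at each step the conditional probability of $+$ is at least $p$. One can then couple step by step with i.i.d.\ uniforms, so that the $+$ set contains an independent Bernoulli$(p)$ set. This gives the domination on $B_n$, and letting $n\to\infty$ (the domination is a closed condition under weak limits) gives (i).

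The main obstacle is step (a): turning the averaged box estimate (ii) into a bound on the single conditional quantity $r$. This requires both the boundary-site control from finite energy and the monotone approximation $r_m\uparrow r$. Step (b) is standard once the FKG-type monotonicity of conditional probabilities under ``more minus'' conditioning is available.
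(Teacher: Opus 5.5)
Your proof is correct, but it takes a different route from the paper: the paper does not reprove the criterion at all. It only checks the hypotheses of Liggett--Steif's Theorem 4.1. These are that $P^-_{\mathbb{Z}^d,\beta}$ is downward FKG, which it inherits from the FKG lattice condition of $P^-_{\Lambda,\beta}$ by letting $\Lambda\uparrow\mathbb{Z}^d$, and that it is translation invariant. It then invokes that theorem and Remark (1) after it, extended from $\mathbb{Z}^2$ to $\mathbb{Z}^d$.

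What you wrote is a self-contained version of that kind of argument, specialized to the Ising minus phase. Its ingredients are the lexicographic order, the half-space quantity $r$, the box estimate showing $r\le 1-p$, and the sequential coupling.

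The citation route is shorter and covers every translation-invariant downward FKG measure. Yours makes the mechanism explicit. With one more line (each factor in your product is at most $r$, since the box-past lies inside the half-space past) it shows that $\lim_n P^-_{\mathbb{Z}^d,\beta}(\sigma\equiv -1 \text{ on } [1,n]^d)^{1/n^d}=r$. That is exactly the quantity the paper then evaluates through the free energy.

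You also use more than necessary. Downward FKG alone already gives $P(\sigma_x=-1\mid\sigma_T=\xi)\le P(\sigma_x=-1\mid\sigma_T\equiv -1)$: apply positive association under $P(\cdot\mid\sigma_{T_-}\equiv -1)$ twice, with $T_-=\{t\in T:\xi_t=-1\}$. The boundary factors can then be bounded below by $P(\sigma_0=-1)$ instead of by finite energy.

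The infinite-volume monotonicity should be justified by the FKG lattice condition in finite volume plus passage to the limit. This works because you condition on finite cylinders of probability bounded away from $0$. It does not follow from the Markov structure, as your parenthetical suggests.

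There are two small slips. First, $y\prec 0$ allows $y_2,\dots,y_d$ of either sign. So the sites whose box-past contains $x+\{y\prec 0,\ |y|\le m\}$ are those at distance at least $m$ from all faces of the box except $\{x_1=n\}$, not just from the lower faces. There are still at least $(n-2m)^d=(1-o(1))n^d$ such sites, so the estimate is unaffected. Second, the roots to take in step (a) are $n^d$-th roots, not $n$-th roots.
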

\begin{proof}
	Since $P^-_{\Lambda,\beta}$ satisfies the FKG lattice condition for each $\Lambda$, $P^-_{\Lambda,\beta}$ has the downward FKG property defined in \cite{LS06}.  By sending $\Lambda \uparrow \mathbb{Z}^d$, $P^-_{\mathbb{Z}^d,\beta}$ also satisfies the downward FKG property. It is clear that $P^-_{\mathbb{Z}^d,\beta}$ is translation invariant. So Theorem 4.1 of \cite{LS06} and Remark (1) right after it complete the proof of the Theorem. Note that Theorem 4.1 of \cite{LS06} is stated for $\mathbb{Z}^2$ but as mentioned before this theorem, it can be extended to $\mathbb{Z}^d$.
\end{proof}

Let $f_{\mathbb{Z}^d}(\beta)$ be the \textit{free energy} (or pressure) defined by
\begin{equation}\label{eq:freeenergydef}
	f_{\mathbb{Z}^d}(\beta):=\lim_{n\rightarrow \infty} \frac{\ln Z_{B_n,\beta}^{\eta}}{|B_n|},
\end{equation}
where we recall that $B_n:=[-n,n]^d \cap \mathbb{Z}^d$. It is well-known that this limit exists and is independent of $\eta$; see, e.g., Theorem 3.6 of \cite{FV18}. A simple computation (see Corollary 4.2 of \cite{LS06}) gives
\begin{equation}
	\lim_{n\rightarrow\infty} \left[P^-_{\mathbb{Z}^d,\beta}(\sigma_x=-1, \forall x \in [1,n]^d \cap \mathbb{Z}^d)\right]^{1/n^d}=\frac{e^{d\beta}}{\exp[f_{\mathbb{Z}^d}(\beta)]}.
\end{equation}
In order to apply Theorem \ref{thm:downwardFKG}, we need to find an upper bound for this limit.
\begin{lemma}\label{lem:freeenergy}
	Let $e_1:=(1,0,\dots,0)$ be the first coordinate unit vector, $\langle \cdot \rangle^0_{\mathbb{Z}^d,t}$ be the expectation with respect to the free measure $P^0_{\mathbb{Z}^d,t}$. Then
	\[f_{\mathbb{Z}^d}(\beta)=\ln 2+d \int_0^{\beta} \langle \sigma_0 \sigma_{e_1} \rangle^0_{\mathbb{Z}^d,t} \, dt \geq \ln 2+ d\ln \cosh(\beta).\]
\end{lemma}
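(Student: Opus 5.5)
The identity comes from differentiating the finite-volume log partition function with free boundary conditions. The inequality then follows from a lower bound $\langle \sigma_0\sigma_{e_1}\rangle^0_{\mathbb{Z}^d,t}\ge \tanh t$ together with $\int_0^\beta \tanh t\,dt=\ln\cosh\beta$.

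\emph{Step 1 (finite volume).} Since the limit in \eqref{eq:freeenergydef} does not depend on $\eta$, I take $\eta\equiv 0$. Then $Z^0_{B_n,0}=2^{|B_n|}$. Differentiating,
\[\frac{d}{dt}\ln Z^0_{B_n,t}=\sum_{xy\in E(B_n)}\langle\sigma_x\sigma_y\rangle^0_{B_n,t},\]
and integrating from $0$ to $\beta$ gives
\[\frac{\ln Z^0_{B_n,\beta}}{|B_n|}=\ln 2+\frac{1}{|B_n|}\int_0^\beta\sum_{xy\in E(B_n)}\langle\sigma_x\sigma_y\rangle^0_{B_n,t}\,dt.\]

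\emph{Step 2 (limit).} Every correlation lies in $[0,1]$ by GKS. Edges within distance $\sqrt{n}$ of the boundary contribute $o(|B_n|)$, and $|E(B_n)|/|B_n|\to d$. For each fixed edge $xy$ in the bulk, GKS monotonicity in the volume gives
\[\langle\sigma_x\sigma_y\rangle^0_{B_n,t}\ \ge\ \langle\sigma_x\sigma_y\rangle^0_{x+B_{\sqrt n},t}.\]
The right-hand side increases to $\langle\sigma_0\sigma_{e'}\rangle^0_{\mathbb{Z}^d,t}$, and by translation invariance this equals the correlation across the corresponding edge at the origin. By lattice symmetry, every nearest-neighbour correlation equals $\langle\sigma_0\sigma_{e_1}\rangle^0_{\mathbb{Z}^d,t}$. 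Also, each $\langle\sigma_x\sigma_y\rangle^0_{B_n,t}$ is bounded above by its infinite-volume limit. Combining the upper and lower bounds with dominated convergence in $t$ yields the identity.

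Step 2 is the main technical point, but it is routine: the upper bound is immediate from GKS monotonicity, and the lower bound needs only bulk edges together with monotone convergence.

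\emph{Step 3 (inequality).} Again by GKS (Griffiths' second inequality), removing all edges except $0e_1$ can only decrease the correlation. For the two-spin system with a single edge, $\langle\sigma_0\sigma_{e_1}\rangle=\tanh t$. Hence $\langle\sigma_0\sigma_{e_1}\rangle^0_{\mathbb{Z}^d,t}\ge\tanh t$, and integrating gives
\[d\int_0^\beta\tanh t\,dt=d\ln\cosh\beta,\]
which is the claimed lower bound.
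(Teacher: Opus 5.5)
Your proposal is correct and follows the paper's proof essentially step for step: free boundary conditions, differentiating $\ln Z^0_{B_n,t}$, a GKS upper bound by the infinite-volume correlation, a GKS lower bound on bulk edges via sub-boxes, and finally comparison with the single-edge graph $K_2$ to get $\tanh t$. The only cosmetic difference is that you use sub-boxes of radius $\sqrt{n}$ in a single limit, whereas the paper fixes $B_M$, sends $n\to\infty$, and then sends $M\to\infty$.
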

\begin{proof}
	Let $Z^0_{B_n,\beta}$ be the partition function for the Ising model on $B_n$ with free boundary conditions, and 
	\[f_{B_n}(\beta):=\frac{\ln Z^0_{B_n,\beta}}{|B_n|}.\]
	Then
	\[f'_{B_n}(\beta)=\frac{\sum_{xy\in E(B_n)}\langle \sigma_x\sigma_y \rangle^0_{B_n,\beta}}{|B_n|},\]
	where $\langle \cdot \rangle^0_{B_n,\beta}$ is the expectation with respect to $P^0_{B_n,\beta}$. Therefore,
	\begin{equation}\label{eq:freeint}
		f_{B_n}(\beta)-f_{B_n}(0)=\frac{1}{|B_n|}\sum_{xy \in E(B_n)}\int_0^{\beta} \langle \sigma_x \sigma_y \rangle^0_{B_n,t} \, dt.
	\end{equation}
	The GKS inequalities give $\langle \sigma_x\sigma_y \rangle^0_{B_n,t} \leq \langle \sigma_x\sigma_y \rangle^0_{\mathbb{Z}^d,t}=\langle \sigma_0 \sigma_{e_1} \rangle^0_{\mathbb{Z}^d,t}$, and thus
	\begin{equation}\label{eq:freeub}
		\limsup_{n\rightarrow \infty}\frac{1}{|B_n|}\sum_{xy \in E(B_n)}\int_0^{\beta} \langle \sigma_x \sigma_y \rangle^0_{B_n,t} \, dt \leq d \int_0^{\beta}\langle \sigma_0 \sigma_{e_1} \rangle^0_{\mathbb{Z}^d,t} \, dt.
	\end{equation}
	Another application of the GKS inequalities gives $\langle \sigma_x\sigma_y \rangle^0_{B_n,t}  \geq \langle \sigma_0\sigma_{e_1} \rangle^0_{B_M,t}$ for each $xy\in E(B_{n-2M})$ with $M\in \mathbb{N}$. Hence,
	\begin{align*}
		\liminf_{n\rightarrow \infty}\frac{1}{|B_n|}\sum_{xy \in E(B_n)}\int_0^{\beta} \langle \sigma_x \sigma_y \rangle^0_{B_n,t} \, dt &\geq \liminf_{n\rightarrow\infty} \frac{|E(B_{n-2M})|}{|B_n|}\int_0^{\beta}\langle \sigma_0 \sigma_{e_1} \rangle^0_{B_M,t} \, dt\\
		&=d \int_0^{\beta}\langle \sigma_0 \sigma_{e_1} \rangle^0_{B_M,t} \, dt. 
	\end{align*}
	Taking $M\rightarrow\infty$ and combining with \eqref{eq:freeenergydef}, \eqref{eq:freeint}, \eqref{eq:freeub} yield
	\[f_{\mathbb{Z}^d}(\beta)=\ln 2+d \int_0^{\beta} \langle \sigma_0 \sigma_{e_1} \rangle^0_{\mathbb{Z}^d,t} \, dt.\]
	Let $K_2$ be the graph with vertex set $\{0,e_1\}$ and edge set $\{0e_1\}$. Then the GKS inequalities imply
	\[ \langle \sigma_0 \sigma_{e_1} \rangle^0_{\mathbb{Z}^d,t} \geq  \langle \sigma_0 \sigma_{e_1} \rangle^0_{K_2,t}=\tanh(t).\]
	Therefore,
	\[\int_0^{\beta} \langle \sigma_0 \sigma_{e_1} \rangle^0_{\mathbb{Z}^d,t} \, dt \geq \int_0^{\beta} \tanh(t) \, dt =\ln \cosh(\beta).\]
	This completes the proof of the lemma.
\end{proof}
Combining Theorem \ref{thm:downwardFKG} and Lemma \ref{lem:freeenergy}, we get that $+$ spins under $P^-_{\mathbb{Z}^d,\beta}$ stochastically dominate a site percolation on $\mathbb{Z}^d$ of parameter $\max\{1-e^{d\beta}/(2[\cosh(\beta)]^d)-\epsilon, 0\}$ for each small $\epsilon$. Taking $\epsilon\rightarrow 0$, we obtain
\begin{proposition}\label{prop:Ising}
	$+$ spins under $P^-_{\mathbb{Z}^d,\beta}$ stochastically dominate a site percolation on $\mathbb{Z}^d$ with parameter $\max\{1-\frac{e^{d\beta}}{2[\cosh(\beta)]^d}, 0\}$.
\end{proposition}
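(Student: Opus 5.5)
The plan is to combine Theorem \ref{thm:downwardFKG} with the free-energy asymptotics and Lemma \ref{lem:freeenergy}. Set
\[q:=\frac{e^{d\beta}}{2[\cosh(\beta)]^d}, \qquad p_*:=\max\{1-q,0\}.\]
If $q\geq 1$, then $p_*=0$ and the statement is trivial, since every configuration dominates the all-minus configuration (site percolation with parameter $0$). So assume $q<1$.

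\textbf{Step 1: bound the decay rate of the all-minus block.} By the formula preceding Lemma \ref{lem:freeenergy} (Corollary 4.2 of \cite{LS06}),
\[\lim_{n\to\infty}\Bigl[P^-_{\mathbb{Z}^d,\beta}\bigl(\sigma_x=-1,\ \forall x\in[1,n]^d\cap\mathbb{Z}^d\bigr)\Bigr]^{1/n^d}=e^{d\beta}e^{-f_{\mathbb{Z}^d}(\beta)}.\]
Lemma \ref{lem:freeenergy} gives $f_{\mathbb{Z}^d}(\beta)\geq \ln 2+d\ln\cosh(\beta)$, so this limit is at most $q$.

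\textbf{Step 2: verify condition (ii) of Theorem \ref{thm:downwardFKG} for a slightly smaller parameter.} Fix $\epsilon\in(0,1-q)$. Because the limit in Step 1 is at most $q<q+\epsilon$, for all sufficiently large $n$ we have
\[P^-_{\mathbb{Z}^d,\beta}\bigl(\sigma_x=-1,\ \forall x\in[1,n]^d\bigr)\leq (q+\epsilon)^{n^d}=\bigl(1-(1-q-\epsilon)\bigr)^{n^d}.\]
This is exactly condition (ii) with $p=1-q-\epsilon$. Theorem \ref{thm:downwardFKG} then shows that the $+$ spins under $P^-_{\mathbb{Z}^d,\beta}$ dominate Bernoulli site percolation with parameter $1-q-\epsilon$.

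\textbf{Step 3: remove $\epsilon$.} This is the only delicate point. Condition (ii) cannot be applied at $p=1-q$ directly, because the limit being at most $q$ does not imply the bound $q^{n^d}$ for each finite $n$. Instead I would use that the set of parameters $p$ for which a fixed measure $\mu$ dominates $\mathbb{P}^s_p$ is closed. Let $f$ be a bounded increasing function depending on finitely many coordinates. Then $\mathbb{E}_{\mathbb{P}^s_p}f$ is a polynomial in $p$, hence continuous, and $\mathbb{E}_\mu f\geq \mathbb{E}_{\mathbb{P}^s_{1-q-\epsilon}}f$ for every small $\epsilon$. Letting $\epsilon\to0$ gives $\mathbb{E}_\mu f\geq \mathbb{E}_{\mathbb{P}^s_{1-q}}f$.

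Domination for all finite-dimensional increasing functions implies domination on $\{-1,+1\}^{\mathbb{Z}^d}$. To see this, note that domination of finite-dimensional marginals yields couplings of these marginals, and these couplings are tight. Taking a limit along an exhausting sequence of boxes gives a monotone coupling of the infinite-volume laws, and Strassen's theorem converts this into the required domination. This completes the proof with parameter $\max\{1-q,0\}$.
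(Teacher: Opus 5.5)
Your proposal is correct and follows the paper's argument: combine Theorem \ref{thm:downwardFKG} with the Liggett--Steif limit formula and Lemma \ref{lem:freeenergy} to get domination with parameter $\max\{1-q-\epsilon,0\}$, then let $\epsilon\to0$, a step the paper only asserts. Your Step 3 supplies the justification for that limit via closedness and a compactness argument on couplings; Strassen's theorem is needed only for the finite-volume couplings, since a monotone coupling of the infinite-volume laws gives the domination directly.
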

\begin{remark}
	By using a larger graph than $K_2$ in the proof of Lemma \ref{lem:freeenergy}, one can get a better estimate for $f_{\mathbb{Z}^d}(\beta)$ and improve Proposition \ref{prop:Ising}.
\end{remark}

\section{Upper bounds for critical probabilities}\label{sec:upperbound}
In this section, we use a second moment method to obtain upper bounds for critical probabilities. This method was used by Cox and Durrett \cite{CD83} in their study on oriented percolation. It is also a central tool in \cite{BDNS20}; see also \cite{BPP98} for further developments.

We fix $d\geq 4$, and write
\[\mathbb{Z}^d=\mathbb{Z}\times \mathbb{Z}^{\tilde{d}} \text{ with }\tilde{d}:=d-1.\] 
We call the first coordinate \textit{horizontal}, and the other $\tilde{d}$ coordinates \textit{outer} coordinates. We construct self-avoiding paths by performing a lazy random walk on the horizontal coordinate and a positive-oriented walk along the outer coordinates. More precisely, for $q\in(0,1/4]$, define
\[\pi_q(-1)=\pi_q(1):=q,\ \pi_q(0)=1-2q.\]
Starting from $(X_0,Y_0)=(0,0)\in \mathbb{Z}\times \mathbb{Z}^{\tilde{d}}$, at macrostep $n\in \mathbb{N}$, choose independently
\[K_n \sim \pi_q, I_n= \text{uniform on }\{1,2,\dots, \tilde{d}\}.\]
Then define
\[X_n:=X_{n-1}+K_n, Y_n:=Y_{n-1}+e_{I_n},\]
where $e_1,\dots,e_{\tilde{d}}$ are the standard basis vectors for $\mathbb{Z}^{\tilde{d}}$.

At macrostep $n$, if $K_n\neq 0$, the path first traverses the horizontal edge $(X_{n-1},Y_{n-1})\rightarrow (X_{n-1}+K_n,Y_{n-1})$ and then traverses the outer edge $(X_{n-1}+K_n,Y_{n-1})\rightarrow (X_{n-1}+K_n, Y_{n-1}+e_{I_n})$; if $K_n=0$, the path only traverses the outer edge. Let $\mu_q$ be the resulting measure on infinite paths. During macrostep $n$, the starting vertex and possible horizontal endpoint have the sum of outer coordinates equaling $n-1$, and its ending vertex increases that sum to $n$. Hence, concatenating the macrosteps produces a self-avoiding nearest-neighbor path, with consecutive macrosteps sharing only one common vertex. Let $\gamma:=(\gamma_0,\gamma_1,\dots) \sim \mu_q$ such that $\gamma_n$ represents the position after $n$ macrosteps. For bond percolation with parameter $p$, we define
\[Z_n^b:=\int p^{-|E(\gamma[0,n])|} {\bf 1}_{\{E(\gamma[0,n]) \text{ is open}\}} \, d \mu_q(\gamma),\] 
where $\gamma[0,n]$ is the full nearest-neighbor path through $n$ macrosteps.
Then
\[\mathbb{E}^b_p [Z_n^b]=1, \ \forall n\in \mathbb{N}.\]
For two independent paths $\gamma$, $\gamma' \sim \mu_q$,
\begin{equation}\label{eq:bond2ndmon}
	\mathbb{E}^b_p[(Z_n^b)^2]=\mathbb{E}_{\mu_q\otimes \mu_q}[p^{-|E(\gamma[0,n]) \cap E(\gamma'[0,n])|}].
\end{equation}
Similarly, for site percolation,
\[Z_n^s:=\int p^{-|V(\gamma[0,n])|} {\bf 1}_{\{V(\gamma[0,n]) \text{ is open}\}} \, d \mu_q(\gamma)\] 
satisfies
\begin{equation}\label{eq:site2ndmon}
	\mathbb{E}^s_p [Z_n^s]=1, \mathbb{E}^s_p[(Z_n^s)^2]=\mathbb{E}_{\mu_q\otimes \mu_q}[p^{-|V(\gamma[0,n]) \cap V(\gamma'[0,n])|}].
\end{equation}
Let $\gamma=(X_n,Y_n)_{n\in \mathbb{N}\cup \{0\}}$ and $\gamma'=(X'_n,Y'_n)_{n\in \mathbb{N}\cup \{0\}}$, and denote their choices at macrostep $n$ by $(K_n,I_n)$ and $(K'_n,I'_n)$. Define
\[D_n^X:=X_n-X'_n, D_n^Y:=Y_n-Y'_n, S_n:=(D_n^X,D_n^Y), \ \forall n\in \mathbb{N}\cup\{0\}.\]
Then
\[(D_0^X, D_0^Y)=(0,0), D_n^X=D_{n-1}^X+K_n-K'_n, D_n^Y=D_{n-1}^Y+e_{I_n}-e_{I'_n}, \ \forall n\in \mathbb{N}.\]
So $(D_n^X)_{n\in \mathbb{N}\cup \{0\}}$ and $(D_n^Y)_{n\in \mathbb{N}\cup \{0\}}$ are independent random walks. Most of this section is devoted to the study of the random walk $(S_n)_{n\in \mathbb{N}\cup \{0\}}$. It is clear that $(S_n)$ lives in the space
\[\mathcal{I}:=\mathbb{Z}\times\{z \in \mathbb{Z}^{\tilde{d}}: \sum_{j=1}^{\tilde{d}} z_j=0\}.\]

We first write the number of intersections in \eqref{eq:bond2ndmon} and \eqref{eq:site2ndmon} as a sum over different macrosteps. Let 
\[H(k):=\begin{cases}
	\{0\}, & k=0,\\
	\{0,k\}, & k \in \{-1,1\}.
\end{cases}\]
Let
\[r_a(k,k'):=|H(k) \cap (-a+H(k'))|, a \in \mathbb{Z}, k, k'\in \{-1,0,1\}\]
be the number of common vertices at outer height $0$ during macrostep $1$ if $\gamma_0=0, K_1=k, \gamma'_0=(-a,0), K'_1=k'$. Note that
\[V(\gamma[0,n])=\left(\bigsqcup_{j=1}^n\{(X_{j-1}+h,Y_{j-1}): h\in H(K_j)\}\right) \bigsqcup \{(X_n,Y_n)\},\]
where the unions are disjoint because all sets have different outer heights. Therefore,
\begin{equation}\label{eq:vertexint}
	|V(\gamma[0,n]) \cap V(\gamma'[0,n])|=\sum_{j=1}^n N_j^s+{\bf 1}_{\{S_n=0\}},
\end{equation}
where
\begin{equation}\label{eq:N^sdef}
	N_j^s:=\begin{cases}
		r_{D_{j-1}^X}(K_j,K'_j), & D_{j-1}^Y=0,\\
		0, & D_{j-1}^Y\neq 0.
	\end{cases}
\end{equation}

Let
\[\epsilon_a(k,k'):={\bf 1}_{\{k \neq 0, k' \neq 0, \text{ the edge with endpoints } 0 \text{ and }k =\text{ the edge with endpoints } -a \text{ and }-a+k' \}}\]
be the number of common horizontal edges at macrostep $1$ if $\gamma_0=0, K_1=k, \gamma'_0=(-a,0), K'_1=k'$. Then we have
\begin{equation}\label{eq:edgeint}
	|E(\gamma[0,n]) \cap E(\gamma'[0,n])|=\sum_{j=1}^n N_j^b,
\end{equation}
where
\begin{equation}\label{eq:N^bdef}
	N_j^b:=\begin{cases}
		\epsilon_{D_{j-1}^X}(K_j,K'_j), & D_{j-1}^Y=0, I_j \neq I'_j,\\
		\epsilon_{D_{j-1}^X}(K_j,K'_j)+{\bf 1}_{\{D^X_{j-1}=K'_j-K_j\}}, & D_{j-1}^Y=0, I_j = I'_j,\\
		0, & D_{j-1}^Y\neq 0.
		\end{cases}
\end{equation}

We next compute the Green function for $(S_n)_{n\in \mathbb{N}\cup \{0\}}$. For the horizontal random walk, define
\begin{equation}\label{eq:xdef}
	x_n(r):=\mathbb{P}(D_n^X=r), \ \forall r\in \mathbb{Z}, n \in \mathbb{N}\cup \{0\}.
\end{equation}
For the outer random walk, define
\begin{equation}\label{eq:uvdef}
	u_n:=\mathbb{P}_0(D_n^Y=0), v_n:=\mathbb{P}_{e_1-e_2}(D_n^Y=0), \ \forall n\in \mathbb{N}\cup \{0\}.
\end{equation}
Let $P$ be the transition kernel of $(S_n)_{n\geq 0}$, and $G:=\sum_{n=0}^{\infty}P^n$ be its Green kernel. Then we have
\[G((a,0),(b,0))=g(b-a), G((a,e_i-e_j),(b,0))=h(b-a),\ \forall a, b\in \mathbb{Z}, i \neq j \in \{1,\dots, \tilde{d}\},\]
where (by independence)
\begin{equation}\label{eq:ghdef}
	g(r):=\sum_{n=0}^{\infty} u_n x_n(r), h(r):=\sum_{n=0}^{\infty} v_n x_n(r), \ \forall r \in \mathbb{Z}.
\end{equation}
(Note that both sums are finite by \eqref{eq:u_nx_nbound} below and the assumption $d\geq 4$.)

We will see that the intersection property of $(S_n)_{n\in \mathbb{N}\cup \{0\}}$ is totally determined by two $5\times 5$ matrices that we define now. Let 
\begin{equation}\label{eq:QCdef}
	\mathcal{S}:=\{-2,-1,0,1,2\}, \mathcal{C}:=\{(a,0): a \in \mathcal{S}\}.
\end{equation}
Define two matrices
\begin{align}
	&M_p^b(a,b):=\sum_{k,l=-1}^1 \pi_q(k) \pi_q(l)\Big[\frac{1}{\tilde{d}}(p^{-\epsilon_a(k,l)-{\bf 1}_{\{a=l-k\}}}-1)g(b-(a+k-l))\nonumber\\
	 &\qquad +\frac{\tilde{d}-1}{\tilde{d}}(p^{-\epsilon_a(k,l)}-1)h(b-(a+k-l))\Big],\label{eq:M^bdef}\\
	&M_p^s(a,b):=\sum_{k,l=-1}^1\pi_q(k) \pi_q(l)(p^{-r_a(k,l)}-1)\Big[\frac{1}{\tilde{d}}g(b-(a+k-l))\nonumber\\
	 &\qquad +\frac{\tilde{d}-1}{\tilde{d}}h(b-(a+k-l))\Big], \ \forall a, b \in \mathcal{S}.\label{eq:M^sdef}
\end{align}
For a square matrix $M$, let $\rho(M)$ be its spectral radius. The following theorem provides estimates on critical probabilities through studying the above two matrices.
\begin{theorem}\label{thm:p_cbound}
	Suppose $d\geq 4$ and $p \in (0,1]$. If $\rho(M_p^b)<1$ then $p_c^b(d)\leq p$; if $\rho(M_p^s)<1$ then $p_c^s(d)\leq p$.
\end{theorem}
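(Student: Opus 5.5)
The plan is the classical second moment argument of Cox and Durrett. I will show that $\rho(M_p^b)<1$ implies $\sup_n \mathbb{E}^b_p[(Z_n^b)^2]<\infty$, and similarly for site percolation.

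\emph{From the second moment bound to the conclusion.} Since $\mathbb{E}^b_p[Z_n^b]=1$, the Cauchy--Schwarz (Paley--Zygmund) inequality gives
\[\mathbb{P}^b_p(Z_n^b>0)\geq 1/\mathbb{E}^b_p[(Z_n^b)^2]\geq c>0\]
uniformly in $n$. On the event $\{Z_n^b>0\}$, some $\mu_q$-path is open through $n$ macrosteps. So the origin is joined by an open path to a vertex whose outer coordinates sum to $n$. These events decrease in $n$, so their intersection has probability at least $c$. Hence the origin lies in an infinite open cluster with positive probability, and $p_c^b(d)\le p$. The same reasoning applies verbatim for sites.

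\emph{Expanding the second moment.} By \eqref{eq:bond2ndmon} and \eqref{eq:edgeint},
\[\mathbb{E}^b_p[(Z_n^b)^2]=\mathbb{E}\Big[\prod_{j=1}^n\big(1+(p^{-N_j^b}-1)\big)\Big]=\sum_{m\ge0}\sum_{1\le j_1<\dots<j_m\le n}\mathbb{E}\Big[\prod_{i=1}^m (p^{-N^b_{j_i}}-1)\Big].\]
All terms are nonnegative. By \eqref{eq:N^bdef}, $N^b_j\neq 0$ forces $D^Y_{j-1}=0$ and the horizontal edges to be able to coincide. This requires $|D^X_{j-1}|\le 2$, i.e.\ $S_{j-1}\in\mathcal{C}$. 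The same holds for $N^s_j$ via $r_a$, which even forces $|a|\le1$.

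Condition successively on $\mathcal{F}_{j_i-1}$ and use the Markov property of $(S_n)$. The factor at time $j_i$ depends on the step $(K_{j_i},K'_{j_i},I_{j_i},I'_{j_i})$. Given that step, the walk moves to $(a+k-l,0)$ if $I=I'$ (probability $1/\tilde d$) and to $(a+k-l,e_i-e_{i'})$ otherwise. Summing over the next time $j_{i+1}$ at which $S_{j_{i+1}-1}=(b,0)$ produces exactly the Green-function factors $g$ and $h$ of \eqref{eq:ghdef}. Therefore the contribution from state $a$ to state $b$ is at most $M^b_p(a,b)$, as defined in \eqref{eq:M^bdef}.

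The first visit is from $S_0=(0,0)\in\mathcal{C}$, contributing $G((0,0),(a,0))=g(a)$, which is finite for $d\ge4$. The last factor carries no Green function and is bounded by a constant $C_p=\max_a\sum_{k,l}\pi_q(k)\pi_q(l)(p^{-2}-1)$. Extending the sum over times to infinity only increases it. Hence
\[\mathbb{E}[(Z^b_n)^2]\le 1+C_p\sum_{m\ge1} g^{\top}(M_p^b)^{m-1}\mathbf 1,\]
where $g$ here denotes the vector $(g(a))_{a\in\mathcal S}$. The entries of $M_p^b$ are nonnegative, so $\rho(M_p^b)<1$ makes the series converge. For sites, the extra term $\mathbf 1_{\{S_n=0\}}$ in \eqref{eq:vertexint} costs at most a factor $p^{-1}$.

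\emph{Main obstacle.} I expect the bookkeeping in the Markov-chain decomposition to be the delicate step. The factor $p^{-N_j}-1$ and the position after step $j$ depend on the same random step, so they must be summed jointly. In particular, whether $I_j=I'_j$ simultaneously determines the extra outer-edge overlap $\mathbf 1_{\{a=l-k\}}$ and whether the walk restarts from $D^Y=0$ (Green function $g$) or from $e_i-e_{i'}$ (Green function $h$). This joint dependence is exactly what the two terms of \eqref{eq:M^bdef} encode. I will check it carefully against \eqref{eq:N^bdef} and the symmetry identity $G((a,e_i-e_j),(b,0))=h(b-a)$.
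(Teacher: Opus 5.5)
Your proposal is correct and follows the paper's proof essentially line by line. Your subset expansion of $\prod_j\bigl(1+(p^{-N_j}-1)\bigr)$ is exactly the paper's expansion of $\hat P_p^n=(P+Q_p)^n$, and it yields the same bound $1+G_{0,\mathcal C}\sum_{m\ge1}M^{m-1}\eta$, with your $C_p\mathbf 1$ dominating $\eta$. Your observation that the origin's cluster reaches every outer height is a direct substitute for the paper's appeal to K\"onig's lemma.

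One small slip: $N^s_j$ does not force $|a|\le 1$. For example, $r_2(-1,1)=1$, since both walks visit the vertex $-1$. So the rows $a=\pm2$ of $M^s_p$ do contribute; this is harmless because you use all of $\mathcal C$ anyway.
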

\begin{proof}
	Let $N_1^*$ be either $N_1^b$ or $N_1^s$ defined in \eqref{eq:N^bdef} and \eqref{eq:N^sdef} respectively. Define the weighted transition kernel
	\[\hat{P}_p(x,y):=\mathbb{E}_x[p^{-N_1^*}{\bf 1}_{\{S_1=y\}}], \ \forall x,y \in \mathcal{I}.\]
	Let 
	\[Q_p:=\hat{P}_p-P.\]
	Since $N_1^*\geq 0$, $Q_p\geq 0$ (i.e., each entry is nonnegative). A key observation is that the rows of $Q_p$ vanish outside $\mathcal{C}$ (defined in \eqref{eq:QCdef}); this is because by \eqref{eq:N^bdef} and \eqref{eq:N^sdef}, $Q_p(x,y)=0$ if the outer component of $x$ is nonzero or its horizontal component does not belong to $S$. The Markov property gives
	\begin{equation}\label{eq:hatP^n}
		\hat{P}_p^n \vec{1}(x)=\mathbb{E}_x[p^{-\sum_{j=1}^n N_j^*}], \ \forall x\in \mathcal{I}, \forall n \in \mathbb{N}.
	\end{equation}
	Define
	\[M(\vec{a},\vec{b}):=\sum_{y \in \mathcal{I}} Q_p(\vec{a},y) G(y, \vec{b}),\ \vec{a}:=(a,0)\in \mathcal{C}, \vec{b}:=(b,0)\in \mathcal{C},\]
	\[\eta(\vec{a}):=\sum_{y \in \mathcal{I}} Q_p(\vec{a},y),\ \vec{a}=(a,0)\in \mathcal{C}.\]
	It is easy to check that \eqref{eq:M^bdef} and \eqref{eq:M^sdef} agree with this definition upon the identification $\vec{a} \rightarrow a$; so $M$ is either $M^b_p$ or $M^s_p$ under this identification. If we expand
	\[\hat{P}_p^n=(Q_p+P)^n,\]
	a term containing $m\geq 1$ copies of $Q_p$ has the form
	\[P^{l_0}Q_pP^{l_1}Q_p\dots Q_p P^{l_m}, \ l_0+l_1+\dots+l_m+m=n.\]
	Since all matrices are nonnegative, we may replace $P^{l_j}$ by $G$ to get an upper bound
	\begin{align*}
		\hat{P}_p^n \vec{1}(0)& \leq 1+\sum_{m=1}^{\infty} \sum_{\vec{a} \in \mathcal{C}} G(0,\vec{a})(Q_pGQ_p\dots Q_p\vec{1})(\vec{a})\\
		&=1+ G_{0,\mathcal{C}} \sum_{m=1}^{\infty} M^{m-1}\eta,
	\end{align*}
	where we have used $P^{l_m}\vec{1}=\vec{1}$ in the first inequality and the key observation above \eqref{eq:hatP^n} in the equality, and $G_{0,\mathcal{C}} $ is the row vector $(G(0,\vec{a}):\vec{a} \in \mathcal{C})$. If $\rho(M)<1$, then
	\[\sum_{m=1}^{\infty} M^{m-1}=(I-M)^{-1} \text{ is finite}.\]
	Therefore, 
	\[\sup_n \hat{P}^n_p \vec{1}(0)\leq 1+ G_{0,\mathcal{C}} (I-M)^{-1}\eta < \infty.\]
	For bond percolation, \eqref{eq:bond2ndmon}, \eqref{eq:edgeint}, \eqref{eq:hatP^n} and the last inequality give a uniform bound on $\mathbb{E}^b_p[(Z_n^b)^2]$. For site percolation, \eqref{eq:site2ndmon}, \eqref{eq:vertexint}, \eqref{eq:hatP^n} and the last displayed inequality give the same conclusion (i.e., the same upper bound with an extra factor $p^{-1}$).
	
	Therefore, the Cauchy-Schwarz inequality implies
	\[\mathbb{P}^*_{p}(Z_n^*>0)\geq \frac{(\mathbb{E}^*_p[Z_n^*])^2}{\mathbb{E}^*_p[(Z_n^*)^2]}>c>0, \ \forall n\in \mathbb{N}.\]
	Since $\{Z_n^*>0\}$ is decreasing in $n$, we have
	\[\mathbb{P}^*_{p}(\cap_{n=1}^{\infty}\{Z_n^*>0\}) \geq c>0.\]
	Since the macropath tree is finitely branching, K\"{o}nig's infinity lemma implies that there is an infinite open path with probability at least $c$.
\end{proof}

Note that $M_p^*$ in \eqref{eq:M^bdef} and \eqref{eq:M^sdef} actually depends on $q$. An immediate consequence of Theorem \ref{thm:p_cbound} is
\[p^*_c(d) \leq \inf_{0< q \leq 1/4}\inf\{p\in(0,1]: \rho(M_p^*)<1\},\ * = b\text{ or }s.\]
For simplicity, we choose
\begin{equation}\label{eq:qdef}
	q=\frac{1}{d+3}.
\end{equation}

Recall that Gelfand's formula says
\[\rho(M)=\lim_{k\rightarrow \infty} \|M^k\|^{1/k},\]
where $\|\cdot\|$ is the Frobenius norm. If $0\leq M \leq \tilde{M}$ (entrywise inequality), then
\begin{equation}\label{eq:srcom}
	\rho(M) \leq \rho(\tilde{M}).
\end{equation}
A min-max Collatz-Wielandt formula gives that for any $M \geq 0$,
\begin{equation}\label{eq:rhoupperbound}
	\rho(M) \leq \max_i \frac{(Mw)_i}{w_i}, \ \forall w>0.
\end{equation}
Since $M_p^*(a,b)=M_p^*(-a,-b)$ for all $a,b \in \mathcal{S}$, we choose a positive vector of the form
\begin{equation}\label{eq:wdef}
	w^*=(w^*_2,w^*_1,1,w^*_1,w^*_2)'.
\end{equation}

We need some upper bounds for $g$ and $h$ defined in \eqref{eq:ghdef}. Recall $u_n$ and $v_n$ in \eqref{eq:uvdef}. It is clear that
\begin{equation}\label{eq:u_ncomp}
	u_n=\frac{1}{\tilde{d}^{2n}}\sum_{k_1+\dots+k_{\tilde{d}}=n, k_i \geq 0}\binom{n}{k_1,\dots,k_{\tilde{d}}}^2.
\end{equation}
The Cauchy-Schwarz inequality gives
\begin{equation}\label{eq:vlequ}
	v_n \leq u_n, \ \forall n\in \mathbb{N}\cup\{0\}.
\end{equation}
Conditioning on the first step $D_1^Y$ gives
\begin{equation}\label{eq:v_ncomp}
	u_{n+1}=\frac{1}{\tilde{d}}u_n+\frac{\tilde{d}-1}{\tilde{d}}v_n, \ \forall n\in \mathbb{N}\cup\{0\}.
\end{equation}
Robbins' bounds
\[\sqrt{2\pi}k^{k+1/2} e^{-k} < k! <\sqrt{2\pi} k^{k+1/2} e^{-k} e^{1/(12k)}, \ k\in \mathbb{N}\]
imply
\begin{equation}\label{eq:uupperbound}
	u_n \leq \frac{1}{\tilde{d}^{n}}\max_{k_1+\dots+k_{\tilde{d}}=n, k_i\geq 0} \binom{n}{k_1,\dots,k_{\tilde{d}}} \leq \frac{\tilde{d}^{\tilde{d}/2}}{(2\pi)^{(\tilde{d}-1)/2}} e^{1/(12n)} \left(1-\frac{\tilde{d}}{n}\right)^{-\tilde{d}/2} n^{-(\tilde{d}-1)/2}, \ \forall n> \tilde{d},
\end{equation}
where we have used that the maximum occurs at a balanced vector $|k_i-k_j|\leq 1$ for each $i,j$ and thus $k_i \geq n/\tilde{d} -1$ for each $i$. The Fourier inversion formula and \eqref{eq:xdef} give
\[x_n(r)=\frac{1}{2\pi}\int_{-\pi}^{\pi} e^{-irt}(1-2q+2q \cos(t))^{2n} \, dt=\frac{1}{2\pi}\int_{-\pi}^{\pi} e^{-irt}(1-4q \sin^2(t/2))^{2n} \, dt.\]
Using
\[1-x\leq e^{-x}, \  \sin(|t|/2) \geq |t|/\pi, \ \forall |t|\leq \pi,\]
we get that for all $r\in\mathbb{Z}$,
\begin{equation*}\label{eq:xupperbound}
	x_n(r) \leq \frac{1}{2\pi}\int_{-\pi}^{\pi} \exp[-8nqt^2/\pi^2]\, dt \leq \frac{1}{2\pi}\int_{-\infty}^{\infty} \exp[-8nqt^2/\pi^2]\, dt = \frac{\sqrt{\pi}}{4\sqrt{2nq}}, \ \forall n \in \mathbb{N}.
\end{equation*}
Combining this with \eqref{eq:uupperbound}, we get that for each integer $N\geq \tilde{d}$,
\begin{equation}\label{eq:u_nx_nbound}
	u_n x_n(r) \leq C_{d,N} n^{-\tilde{d}/2}, \ \forall r \in \mathbb{Z}, n \geq N+1,
\end{equation}
where
\[C_{d,N}:=\frac{\tilde{d}^{\tilde{d}/2}}{(2\pi)^{(\tilde{d}-1)/2}} e^{1/(12(N+1))} \left(1-\frac{\tilde{d}}{N+1}\right)^{-\tilde{d}/2}\frac{\sqrt{\pi}}{4\sqrt{2q}}.\]
By \eqref{eq:vlequ}, the same bound holds with $u_n$ replaced by $v_n$. Therefore,
\[\sum_{n>N} u_n x_n(r) \leq C_{d,N} \sum_{n>N} n^{-\tilde{d}/2} \leq C_{d,N} \frac{2 N^{1-\tilde{d}/2}}{\tilde{d}-2}=: \tau_{d,N}, \ \sum_{n >N} v_n x_n(r) < \tau_{d,N}.\]
To estimate $p_c^b(d)$ and $p_c^s(d)$ for $4\leq d \leq 15$, we choose $N:=500$. Then
\[g(r)\leq \bar{g}(r), \ h(r) \leq \bar{h}(r),\]
where
\[\bar{g}(r):=\sum_{n=0}^{500} u_n x_n(r) + \tau_{d,500}, \ \bar{h}(r):=\sum_{n=0}^{500} v_n x_n(r)+\tau_{d,500}.\]
From \eqref{eq:xdef}, \eqref{eq:qdef}, \eqref{eq:u_ncomp} and \eqref{eq:v_ncomp}, one can see that $x_n(r), u_n, v_n$ are all rational numbers. Since both $g$ and $h$ are even in $r$, only $0\leq r \leq 6$ are needed in \eqref{eq:M^bdef} and \eqref{eq:M^sdef}. Let $\overline{M}_p^*$ be the matrix obtained from \eqref{eq:M^bdef} and \eqref{eq:M^sdef} with $g$ and $h$ replaced by $\bar{g}$ and $\bar{h}$ respectively. Then we have
\[M_p^* \leq \overline{M}_p^*, \ *= b \text{ or }s.\]

\begin{table}[htb!]
	\centering
	\begin{equation*}
		\begin{array}{c|cc|cc}
			d& w_1^b&w_2^b&w_1^s&w_2^s\\ \hline
			4&0.31782007&0.02869201&0.29587267&0.02219923\\
			5&0.25978204&0.02061192&0.26589049&0.01775655\\
			6&0.22613359&0.01567357&0.23721981&0.01412977\\
			7&0.20174586&0.01234909&0.21321446&0.01142055\\
			8&0.18261917&0.00998966&0.19333219&0.00939086\\
			9&0.16701866&0.00825099&0.17673071&0.00784594\\
			10&0.15398169&0.00693180&0.16268798&0.00664634\\
			11&0.14286290&0.00590544&0.15068272&0.00569956\\
			12&0.13327591&0.00509191&0.14029288&0.00493877\\
			13&0.12492981&0.00443650&0.13123463&0.00432016\\
			14&0.11754785&0.00389912&0.12325769&0.00380973\\
			15&0.11101571&0.00345446&0.11618447&0.00338408
		\end{array}
	\end{equation*}
	\caption{Values for $w^*$ defined in \eqref{eq:wdef}.}
	\label{tab:vector}
\end{table}

\begin{table}[htb!]
	\centering
	\begin{equation*}
		\begin{array}{c|c|c}
			d&p_c^b(d)\le b_d&p_c^s(d)\le s_d\\ \hline
			4&0.3026&0.4719\\
			5&0.2032&0.2839\\
			6&0.1620&0.2088\\
			7&0.1367&0.1676\\
			8&0.1189&0.1409\\
			9&0.1055&0.1221\\
			10&0.0950&0.1079\\
			11&0.0864&0.0969\\
			12&0.0793&0.0879\\
			13&0.0734&0.0806\\
			14&0.0682&0.0744\\
			15&0.0638&0.0691
		\end{array}
	\end{equation*}
	\caption{Upper bounds for critical probabilities.}
	\label{tab:p_cbounds}
\end{table}

Using the values in Table \ref{tab:vector} for $w^*$ defined in \eqref{eq:wdef} and the upper bounds in Table \ref{tab:p_cbounds} for $p_c^*(d)$, one can verify (by applying \eqref{eq:srcom} and \eqref{eq:rhoupperbound}) that
\[\rho(M_{b_d}^b) \leq \rho(\overline{M}_{b_d}^b)<1,\ \rho(M_{s_d}^s) \leq \rho(\overline{M}_{s_d}^s)<1.\]
Then Theorem \ref{thm:p_cbound} implies that the upper bounds in Table \ref{tab:p_cbounds} are rigorous.

\section{Proof of the main results}\label{sec:proofofmainthms}
In this section, we prove Theorems \ref{thm:per} and \ref{thm:Ising}. We first prove upper bounds for $\beta_c(d)$ and $p_c^b(d)$ which hold for all $d \geq 3$. The upper bounds for $\beta_c(d)$ follow from the infrared bounds of Fr\"{o}hlich, Simon and Spencer \cite{FSS76} directly. Even though there are very precise asymptotic results for $p_c^b(d)$ for all large $d$ \cite{Kes90,HS95,vdHS05}, we did not find any upper bounds for $p_c^b(d)$ as a simple function of $d$ which hold for all $d\geq3$ in the literature. In Lemma \ref{lem:p_cub} below, we provide such a rough bound; it has the advantage of being explicit and applicable for our proof of Theorem \ref{thm:per} for all $d\geq 13$ but loses a factor of $2$ asymptotically.

Theorem 3.1 of \cite{FSS76} gives
\begin{equation}\label{eq:beta_cub}
	\beta_c(d) \leq \frac{G_d(0)}{2d} :=\frac{1}{2d} \frac{1}{(2\pi)^d} \int_{[-\pi,\pi]^d} \frac{d k}{1-\sum_{j=1}^d \cos(k_j)/d}, \ \forall d\geq 3.
\end{equation}
\begin{lemma}\label{lem:beta_cub}
	For each $d\geq 3$, 
	\[\beta_c(d) < \frac{1}{2(d-2)}.\]
\end{lemma}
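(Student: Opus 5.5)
The plan is to start from \eqref{eq:beta_cub} and show that $G_d(0)<\frac{d}{d-2}$; dividing by $2d$ then gives exactly $\beta_c(d)<\frac{1}{2(d-2)}$.

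\emph{Step 1: a one-dimensional representation.} Put $\phi(k):=\sum_{j=1}^d\cos(k_j)/d$. Since $1-\phi(k)>0$ for almost every $k$, we can write $\frac{1}{1-\phi(k)}=\int_0^\infty e^{-t(1-\phi(k))}\,dt$. The integrand is nonnegative, so Tonelli lets us exchange the integrals, and the $k$-integral factorizes over coordinates:
\[
G_d(0)=\int_0^\infty \Big(\frac{1}{2\pi}\int_{-\pi}^{\pi}e^{-\frac{t}{d}(1-\cos\theta)}\,d\theta\Big)^d dt=\int_0^\infty \psi(t/d)^d\,dt,
\]
where $\psi(s):=\frac{1}{2\pi}\int_{-\pi}^{\pi}e^{-s(1-\cos\theta)}\,d\theta=e^{-s}I_0(s)$.

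\emph{Step 2: the key pointwise bound $\psi(s)<(1+2s)^{-1/2}$ for $s>0$.} Using $1-\cos\theta=2\sin^2(\theta/2)$, we get $\psi(s)=\mathbb{E}[e^{-2sU}]$, where $U=\sin^2(\Theta/2)$ and $\Theta$ is uniform on $[-\pi,\pi]$. A direct change of variables shows that $U$ has the arcsine law, i.e.\ $U\sim\mathrm{Beta}(1/2,1/2)$.

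Now let $V,V'$ be i.i.d.\ $\mathrm{Gamma}(1/2,1)$. By the beta--gamma algebra, $U\overset{d}{=}V/(V+V')$ and $W:=V+V'\sim\mathrm{Exp}(1)$, with $U$ independent of $W$. Hence $V\overset{d}{=}UW$, and conditioning on $U$ gives
\[
(1+2s)^{-1/2}=\mathbb{E}[e^{-2sV}]=\mathbb{E}\big[\mathbb{E}[e^{-2sUW}\mid U]\big]=\mathbb{E}\Big[\frac{1}{1+2sU}\Big].
\]
The inequality $e^{-x}\le \frac{1}{1+x}$ holds for $x\ge0$, with strict inequality for $x>0$. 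Since $U>0$ almost surely, this yields $\psi(s)=\mathbb{E}[e^{-2sU}]<\mathbb{E}[(1+2sU)^{-1}]=(1+2s)^{-1/2}$ for every $s>0$.

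\emph{Step 3: integrate.} For $d\ge3$,
\[
G_d(0)<\int_0^\infty\Big(1+\frac{2t}{d}\Big)^{-d/2}dt=\frac{d}{2}\cdot\frac{1}{d/2-1}=\frac{d}{d-2}.
\]
The inequality is strict because the integrands differ strictly on $(0,\infty)$ and the right-hand integral is finite, as $d/2>1$. Combining with \eqref{eq:beta_cub} gives $\beta_c(d)\le G_d(0)/(2d)<\frac{1}{2(d-2)}$.

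The main obstacle is finding a bound in Step 2 sharp enough. Cruder routes, such as applying AM--GM to $1-\phi$ and then factorizing, lose a constant factor, giving roughly $G_d(0)\lesssim 2$ instead of $1+O(1/d)$. The comparison with the $\mathrm{Gamma}(1/2)$ Laplace transform is exactly tight enough to produce $d/(d-2)$. The only points that need care there are the beta--gamma factorization and the arcsine identification of $U$.
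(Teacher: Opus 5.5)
Your proposal is correct and follows the paper's proof essentially step for step. You use the same representation $G_d(0)=\int_0^\infty [e^{-t/d}I_0(t/d)]^d\,dt$, the same pointwise bound $e^{-s}I_0(s)<(1+2s)^{-1/2}$, and the same final integration giving $d/(d-2)$. The one difference is that the paper cites Theorem 3.1 of \cite{YC16} for the Bessel inequality $I_0(t)<e^t/\sqrt{1+2t}$, whereas you prove it directly, via the arcsine law and the identity $(1+2s)^{-1/2}=\mathbb{E}[(1+2sU)^{-1}]$ combined with $e^{-x}<(1+x)^{-1}$; this makes your argument self-contained.
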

\begin{proof}
	Using $1/a=\int_0^{\infty} e^{-sa}\, ds$ for each $a>0$, we may write
	\begin{align*}
		G_d(0)&=\frac{1}{(2\pi)^d} \int_{[-\pi,\pi]^d} \int_0^{\infty} \exp[-s+s \sum_{j=1}^d \cos(k_j)/d] \, ds \, dk\\
		&=\int_0^{\infty} e^{-s} \prod_{j=1}^d \left[\frac{1}{2\pi}\int_{-\pi}^{\pi} \exp[s \cos(k_j)/d] \, d k_j\right] \, ds,
	\end{align*}
	where we have used the Fubini-Tonelli theorem in the second equality.
	Recall the modified Bessel function of the first kind of order zero
	\[I_0(z):=\frac{1}{2\pi} \int_{-\pi}^{\pi} \exp[z\cos(\theta)] \, d\theta=\frac{1}{\pi} \int_0^{\pi} \exp[z\cos(\theta)] \, d\theta.\]
	Plugging this into $G_d(0)$, we get
	\[G_d(0)= \int_0^{\infty} e^{-s} [I_0(s/d)]^d \,ds= d\int_0^{\infty}  [e^{-t}I_0(t)]^d \, dt.\]
	Theorem 3.1 of \cite{YC16} gives
	\[I_0(t)<e^t/\sqrt{1+2t}, \ \forall t>0.\]
	Therefore,
	\[G_d(0)< d\int_0^{\infty} (1+2t)^{-d/2} \, dt= d/(d-2), \ \forall d\geq 3.\]
	This combined with \eqref{eq:beta_cub} completes the proof of the lemma.
\end{proof}
We remark that the upper bound in Lemma \ref{lem:beta_cub} is slightly weaker than the inequality (12) in \cite{ABL87}.

\begin{lemma}\label{lem:p_cub}
	For each $d\geq 3$,
	\[p_c^b(d) \leq 1-\exp[-G_d(0)/d]<1/(d-2).\]
\end{lemma}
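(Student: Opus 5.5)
The plan is to deduce this from the Ising bound \eqref{eq:beta_cub} through the Fortuin--Kasteleyn random cluster representation, rather than from a new second-moment computation. The first inequality will follow from $p_c^b(d)\le 1-e^{-2\beta_c(d)}$ combined with $\beta_c(d)\le G_d(0)/(2d)$. The second inequality will follow from the estimate $G_d(0)<d/(d-2)$ already proved inside Lemma \ref{lem:beta_cub}.

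\emph{Step 1: relate Ising and the random cluster model.} Fix $\beta>\beta_c(d)$ and put $p_\beta:=1-e^{-2\beta}$. Let $\phi^1_{p_\beta,2}$ be the wired random cluster measure on $\mathbb{Z}^d$ with $q=2$. By the Edwards--Sokal coupling, $\langle\sigma_0\rangle^+_{\mathbb{Z}^d,\beta}=\phi^1_{p_\beta,2}(0\leftrightarrow\infty)$. Since $\beta>\beta_c(d)$, the spontaneous magnetization is positive. (This is the standard characterization of $\beta_c$: $\langle\sigma_0\rangle^+>0$ above $\beta_c$.) Hence $\phi^1_{p_\beta,2}$ has an infinite cluster with positive probability.

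\emph{Step 2: compare with Bernoulli percolation.} Use the comparison inequality for random cluster measures: for $q\ge1$, $\phi^\xi_{p,q}\le_{\textnormal{st}}\mathbb{P}^b_p$. This holds in finite volume with any boundary condition, since the conditional probability that an edge is open given the rest is at most $p$. It passes to the infinite-volume wired limit because the events involved are increasing. The event that there is an infinite open cluster is increasing, so $\mathbb{P}^b_{p_\beta}(\text{infinite cluster})>0$. Therefore $p_c^b(d)\le 1-e^{-2\beta}$ for every $\beta>\beta_c(d)$. Letting $\beta\downarrow\beta_c(d)$ and using \eqref{eq:beta_cub} gives
\[
p_c^b(d)\le 1-e^{-2\beta_c(d)}\le 1-\exp[-G_d(0)/d].
\]

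\emph{Step 3: the explicit bound.} The proof of Lemma \ref{lem:beta_cub} shows $G_d(0)<d/(d-2)$. Together with $1-e^{-x}\le x$, this gives
\[
1-\exp[-G_d(0)/d]\le G_d(0)/d<1/(d-2).
\]

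The only genuinely delicate point is Step 1: correctly quoting the facts that above $\beta_c$ the plus-state magnetization is positive, and that it equals the wired FK connection probability. The infinite-volume passage in Step 2 needs care with the limit. If one prefers to avoid random cluster theory, a fallback is a direct second-moment argument in the style of Theorem \ref{thm:p_cbound}, with simple random walk paths and Poissonized edges ($p=1-e^{-\lambda}$). I expect that to be messier, so I would go with the FK route.
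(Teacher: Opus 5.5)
Your proposal is correct and follows the paper's route. The paper cites Grimmett's comparison inequality $p_c^b(d)\le p_c^I(d)$ and the Edwards--Sokal identity $p_c^I(d)=1-e^{-2\beta_c(d)}$, then applies \eqref{eq:beta_cub} and $G_d(0)<d/(d-2)$ together with $1+x\le e^x$, exactly as in your Steps 1--3; the only difference is that you prove the comparison step directly (conditional edge probabilities at most $p$, then passage to the wired limit) and use only the one-sided bound $p_c^I(d)\le 1-e^{-2\beta_c(d)}$, which is all the lemma requires.
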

\begin{proof}
	Theorem 3.1 of \cite{Gri95} gives
	\[p_c^b(d) \leq p_c^I(d),\]
	where $p_c^I(d)$ is the critical probability for the $q=2$ random-cluster model on $\mathbb{Z}^d$. The Edwards-Sokal coupling \cite{ES88} implies
	\[p_c^I(d)=1-e^{-2\beta_c(d)}.\]
	This, \eqref{eq:beta_cub}, and Lemma \ref{lem:beta_cub} finish the proof of the lemma since $1+x \leq e^x$ for all $x\in \mathbb{R}$.
\end{proof}

\begin{proof}[Proof of Theorem \ref{thm:per}]
	Note that
	\[f(d):=(1-1/(d-2))^{2d} \text{ is increasing in } d\geq 3.\]
	By Table \ref{tab:p_cbounds}, 
	\[f(13)=(1-1/11)^{26}>0.08390>0.0806 \geq p_c^s(13).\]
	Recall the definition of $\mathcal{X}_p(d)$  in the introduction. For fixed $d$, $\mathcal{X}_p(d)$ is decreasing in $p$. Since $p^s_c(d)$ is decreasing in $d$, $f(d)\geq f(13)>p_c^s(13)\geq p_c^s(d)$ for each $d\geq 13$. Proposition~\ref{prop:per} and Lemma \ref{lem:p_cub} imply that the statement in Theorem \ref{thm:per} holds for all $d\geq 13$. For $d=9$, one may use the bounds in Table \ref{tab:p_cbounds} and Proposition \ref{prop:per} to conclude the proof
	\[(1-0.1055)^{18}>0.1344>0.1221 \geq p_c^s(9).\]
	The proof for $d=10,11,12$ is similar.
\end{proof}

\begin{proof}[Proof of Theorem \ref{thm:Ising}]
	Let
	\[f(d,\beta):=1-\frac{e^{d\beta}}{2[\cosh(\beta)]^d}=1-\frac{1}{2}\left[\frac{2}{1+e^{-2\beta}}\right]^d.\]
	For fixed $d$, $f(d,\beta)$ is decreasing in $\beta>0$. Plugging  $\beta=1/(2d-4)$ from Lemma \ref{lem:beta_cub} into $f$, we get
	\[f(d,1/(2d-4))=1-\frac{1}{2}\left[\frac{2}{1+e^{-1/(d-2)}}\right]^d \text{ is increasing in }d\geq 3.\]
	For $d=12$, we have
	\[f(12,1/20)>0.1024>0.0879 \geq p_c^s(12),\]
	where we have used Table \ref{tab:p_cbounds} in the last inequality. This and Proposition \ref{prop:Ising} prove Theorem~\ref{thm:Ising} for all $d\geq 12$. For $d=8,\dots,11$,  \eqref{eq:beta_cub} gives
	\[\beta_c(8)<0.0675, \beta_c(9)<0.0594, \beta_c(10)<0.0530, \beta_c(11)<0.0479.\]
	For $d=8$, we may use Proposition \ref{prop:Ising} and Table \ref{tab:p_cbounds} to conclude the proof
	\[f(8,0.0675)>0.1574>p_c^s(8).\]
	The proof for $d=9,10,11$ is similar.
\end{proof}

\section{Numerical estimates}\label{sec:numerical}
In this section, we first list numerical estimates for $p_c^b(d)$ and $p_c^s(d)$ with $3\leq d \leq 12$ from \cite{XWLD14} and \cite{MM18} in Table \ref{tab:p_cnumerical}. 
\begin{table}[htb!]
	\centering
	\begin{equation*}
		\begin{array}{c|c|c}
       d &  p_c^{b}(d)    &       p_c^{s}(d)\\  \hline
       3   &     0.24881185(10)   &               0.31160768(15)\\
       4   &       0.16013122(6) &             0.19688561(3)\\
       5   &        0.11817145(3)  &             0.14079633(4)\\
       6   &        0.09420165(2) &             0.109016661(8)\\
       7   &        0.078675230(2) &               0.088951121(1)\\
       8    &      0.0677084181(3)  &              0.075210128(1)\\
       9   &       0.0594960034(1) &              0.0652095348(6)\\
      10  &       0.0530925842(2)  &             0.0575929488(4)\\
      11   &       0.04794968373(8) &            0.0515896843(2)\\
      12   &      0.04372385825(10) &            0.0467309755(1)
		\end{array}
\end{equation*}
\caption{Numerical estimates of critical probabilities. The $d=3$ entries are from \cite{XWLD14}; the $d=4,\dots,12$ entries are from \cite{MM18}.}
\label{tab:p_cnumerical}
\end{table}

For the Ising model, we find the following numerical estimates for $\beta_c(d)$ from \cite{LM23} for $d=4$ and \cite{LM15} for $d=5,6,7$.
\begin{equation}\label{eq:beta_cnumerical}
	\begin{aligned}
		\beta_c(4)& \approx 0.149693785(10), &&\beta_c(5) \approx 0.11391498(2),\\
		 \beta_c(6)& \approx 0.0922982(3),  &&\beta_c(7) \approx 0.0777086(8).
	\end{aligned}
\end{equation}

\section*{Acknowledgments}
This research was partially supported by the National Natural Science Foundation of China (No. 12271284 and No. 12226001). The authors acknowledge the use of AI tools for the computations in Section \ref{sec:upperbound}.

%\section*{Data Availability}
%Data sharing is not applicable to this article as no datasets were generated or analysed during the current study.

\bibliographystyle{abbrv}
\bibliography{reference}

\end{document}